\theoremstyle{plain}
\newtheorem{prop}{Proposition}
\newtheorem{lem}[prop]{Lemma}
\newtheorem{thrm}[prop]{Theorem}
\theoremstyle{definition}
\newtheorem{defn}[prop]{Definition}
\newtheorem{rem}[prop]{Remark}
\newenvironment{psmallmatrix}
  {\left(\begin{smallmatrix}}
  {\end{smallmatrix}\right)}
\newcommand{\addresseshere}{%
  \enddoc@text\let\enddoc@text\relax
}
\author{Brandon Williams }
\subjclass[2010]{11F27,11F55}
\address{Fachbereich Mathematik \\ Technische Universit\"at Darmstadt \\ 64289 Darmstadt, Germany}
\email{bwilliams@mathematik.tu-darmstadt.de}
\thanks{This research is supported by a postdoctoral fellowship of the LOEWE research unit Uniformized Structures in Arithmetic and Geometry.}
\begin{document}

\nocite{*}

\title{Two graded rings of Hermitian modular forms}

\begin{abstract} We give generators and relations for the graded rings of Hermitian modular forms of degree two over the rings of integers in $\mathbb{Q}(\sqrt{-7})$ and $\mathbb{Q}(\sqrt{-11})$. In both cases we prove that the subrings of symmetric modular forms are generated by Maass lifts. The computation uses a reduction process against Borcherds products which also leads to a dimension formula for the spaces of modular forms.
\end{abstract}

\maketitle

\section{Introduction}

Hermitian modular forms of degree $n \in \mathbb{N}$ are modular forms that transform under an action of the split-unitary group $\mathrm{SU}(n,n;\mathcal{O})$ with entries in some order $\mathcal{O}$ in an imaginary-quadratic number field. Through the natural embedding of $\mathrm{SU}(n,n;\mathcal{O})$ in $\mathrm{Sp}_{4n}(\mathbb{Z})$, the Shimura variety attached to $\mathrm{SU}(n,n;\mathcal{O})$ parameterizes certain principally polarized $(2n)$-dimensional abelian varieties, namely the abelian varieties $A$ of \emph{Weil type}, i.e. admitting multiplication by $\mathcal{O}$ in such a way that the eigenvalues of $\mathcal{O}$ acting on $A$ occur in complex-conjugate pairs. (These were investigated by Weil in connection with the Hodge conjecture; see for example the discussion in \cite{LPS}, which also explains the connection to orthogonal Shimura varieties when $n=2$.) To study such objects it is helpful to have coordinates on the moduli space; in other words, generators for graded rings of Hermitian modular forms.

In \cite{DK1}, \cite{DK2}, Dern and Krieg began a program to compute these rings in degree $n=2$ based on Borcherds' \cite{B} theory of orthogonal modular forms with Heegner divisors (and the exceptional isogeny from $\mathrm{SU}(2,2)$ to $\mathrm{SO}(2,4)$). In particular they give an explicit description of the modular fourfolds associated to $\mathrm{SU}(2,2,\mathcal{O})$ where $\mathcal{O}$ is the maximal order in $\mathbb{Q}(\sqrt{-3})$ and $\mathbb{Q}(\sqrt{-1})$ (where the fourfold is rational) and in $\mathbb{Q}(\sqrt{-2})$ (where it is not). The contribution of this note is to carry out these computations for the imaginary-quadratic fields of the smallest two remaining discriminants: $\mathbb{Q}(\sqrt{-7})$ and $\mathbb{Q}(\sqrt{-11})$.

The rough idea of \cite{DK1}, \cite{DK2} is similar to the well-known computation of the ring of elliptic modular forms, $M_{\ast}(\mathrm{SL}_2(\mathbb{Z})) = \mathbb{C}[E_4,E_6]$. The Riemann-Roch theorem (in the form of the ``$k/12$ formula'') shows that every modular form of weight not divisible by $6$ has a zero at the elliptic point $\rho = e^{2\pi i / 3}$, and that the Eisenstein series $E_4$ and $E_6$ have no zeros besides a simple zero at $\rho$ and at $i$ (and their conjugates under $\mathrm{SL}_2(\mathbb{Z})$), respectively. Now every form in $M_{\ast}(\mathrm{SL}_2(\mathbb{Z}))$ of weight not a multiple of $6$ is divisible by $E_4$, and every form of weight $6k$ becomes divisible by $E_4$ after subtracting some scalar multiple of $E_6^k$. The claim follows by induction on the weight, together with the fact that modular forms of weight $k \le 0$ are constant.

In the $\mathrm{SU}(2,2)$ case the role of $E_4$ above is played by a Borcherds product; the elliptic point $\rho$ is replaced by the Heegner divisors; and the evaluation at $\rho$ is replaced by the pullbacks, which send Hermitian modular forms to Siegel paramodular forms of degree two. With increasing dimension and level, the Heegner divisors which occur as divisors of modular forms are more complicated and the pullback maps to Heegner divisors are rarely surjective. To overcome these issues our basic argument is as follows. We construct Hermitian modular forms (Eisenstein series, theta lifts, pullbacks from $\mathrm{O}(2,5)$, theta series, etc; here, theta lifts and Borcherds products turn out to be sufficient) and compute their pullbacks to paramodular forms. At the same time we use the geometry of the Hermitian modular fourfold (in particular the intersections of special divisors) to constrain the images of the pullback maps, with the goal of determining sufficiently many images completely. There seems to be no reason in general to believe that this procedure will succeed, and as the discriminant of the underlying field increases it certainly becomes more difficult; however, when this computation does succeed it is straightforward to determine the complete ring structure.

This note is organized as follows. In section 2 we review Hermitian and orthogonal modular forms, theta lifts and pullbacks. In section 3 we recall the structure of the graded rings of paramodular forms of degree two and levels $1,2,3$. In sections 4 and 5 we compute the graded rings of Hermitian modular forms for the rings of integers of $\mathbb{Q}(\sqrt{-7})$ and $\mathbb{Q}(\sqrt{-11})$ by reducing against distinguished Borcherds products of weight $7$ and $5$, respectively. (The ideal of relations for $\mathbb{Q}(\sqrt{-11})$ is complicated and left to an auxiliary file.) In section 6 we compute the dimensions of spaces of Hermitian modular forms.

\textbf{Acknowledgments.} I am grateful to Jan H. Bruinier, Aloys Krieg and John Voight for helpful discussions.

\section{Preliminaries}

In this section we review some facts about Hermitian modular forms of degree two and the related orthogonal modular forms. For a more thorough introduction the book \cite{K} and the dissertation \cite{Dern} are useful references.

\subsection{Hermitian modular forms of degree two}
Let $\mathbf{H}_2$ denote the Hermitian upper half-space of degree two: the set of complex $(2 \times 2)$-matrices $\tau$ for which, after writing $\tau = x + iy$ where $x = \overline{x}^T$ and $y = \overline{y}^T$, the matrix $y$ is positive-definite. The split-unitary group $$\mathrm{SU}_{2,2}(\mathbb{C}) = \Big\{ M \in \mathrm{SL}_4(\mathbb{C}): \; M^T J \overline{M} = J \Big\}, \; \; J = \begin{psmallmatrix} 0 & 0 & -1 & 0 \\ 0 & 0 & 0 & -1 \\ 1 & 0 & 0 & 0 \\ 0 & 1 & 0 & 0 \end{psmallmatrix}$$ acts on $\mathbf{H}_2$ by M\"obius transformations: $$M \cdot \tau = (a \tau + b)(c \tau + d)^{-1}, \; \; M = \begin{psmallmatrix} a & b \\ c & d \end{psmallmatrix} \in \mathrm{SU}_{2,2}(\mathbb{C}), \; \tau \in \mathbf{H}_2.$$

Fix an order $\mathcal{O}$ in an imaginary-quadratic number field $K$. A \textbf{Hermitian modular form} of weight $k \in \mathbb{N}_0$ (and degree two) is a holomorphic function $F : \mathbf{H}_2 \rightarrow \mathbb{C}$ which satisfies $$F(M \cdot \tau) = \mathrm{det}(c \tau + d)^k F(\tau) \; \text{for all} \; M = \begin{psmallmatrix} a & b \\ c & d \end{psmallmatrix} \in \mathrm{SU}_{2,2}(\mathcal{O}) \; \text{and} \; \tau \in \mathbf{H}_2.$$ Note that $F$ extends holomorphically to the Baily-Borel boundary (i.e. Koecher's principle) as this contains only components of dimension $1$ and $0$. \textbf{Cusp forms} of weight $k$ are modular forms which tend to zero at each one-dimensional cusp: that is, modular forms $f$ for which $$\lim_{y \rightarrow \infty} \Big( f \Big|_k M \Big)(iy) = 0 \; \text{for all} \; M \in \mathrm{SU}_{2,2}(K).$$

\subsection{Orthogonal modular forms and Hermitian modular forms} Suppose $\Lambda = (\Lambda,Q)$ is an $\ell$-dimensional positive-definite even lattice; that is, $\Lambda$ is a free $\mathbb{Z}$-module of rank $\ell$ and $Q$ is a positive-definite quadratic form on $\Lambda \otimes \mathbb{R}$ taking integral values on $\Lambda$. One can define an upper half-space $$\mathbb{H}_{\Lambda} = \{(\tau,z,w): \; \tau,w \in \mathbb{H}, \; z \in \Lambda \otimes \mathbb{C} , \; Q(\mathrm{im}(z)) < \mathrm{im}(\tau) \cdot \mathrm{im}(w)\} \subseteq \mathbb{C}^{\ell + 2}.$$ This is acted upon by $\mathrm{SO}^+(\Lambda \oplus \mathrm{II}_{2,2})$ (the connected component of the identity) by M\"obius transformations. To make this explicit it is helpful to fix a Gram matrix $\mathbf{S}$ for $Q$ and realize $\mathrm{SO}^+(\Lambda \oplus II_{2,2})$ as a subgroup of those matrices which preserve the block matrix $\begin{psmallmatrix} 0 & 0 & 0 & 0 & 1 \\ 0 & 0 & 0 & 1 & 0 \\ 0 & 0 & \mathbf{S} & 0 & 0 \\ 0 & 1 & 0 & 0 & 0 \\ 1 & 0 & 0 & 0 & 0 \end{psmallmatrix} \in \mathbb{Z}^{6 \times 6}$ under conjugation. For such a matrix $M$ and $(\tau,z,w) \in \mathbb{H}_{\Lambda}$, one can define $M \cdot (\tau,z,w) = (\tilde \tau, \tilde z, \tilde w) \in \mathbb{H}_{\Lambda}$ by $$M \begin{psmallmatrix} Q(z) - \tau w \\ \tau \\ z \\ w \\ 1 \end{psmallmatrix} = j(M;\tau,z,w) \begin{psmallmatrix} Q(\tilde z) - \tilde \tau \tilde w \\ \tilde \tau \\ \tilde z \\ \tilde w \\ 1 \end{psmallmatrix} \; \text{for some} \; j(M;\tau,z,w) \in \mathbb{C}^{\times}.$$

The \textbf{orthogonal modular group} $\Gamma_{\Lambda}$ is the discriminant kernel of $\Lambda \oplus \mathrm{II}_{2,2}$; that is, the subgroup of $\mathrm{SO}^+(\Lambda \oplus \mathrm{II}_{2,2})$ which acts trivially on $\Lambda'/\Lambda$. An \textbf{orthogonal modular form} is then a holomorphic function $f : \mathbb{H}_{\Lambda} \rightarrow \mathbb{C}$ which satisfies $$f(M \cdot (\tau,z,w)) = j(M;\tau,z,w)^k f(\tau,z,w)$$ for all $M \in \Gamma_{\Lambda}$ and $(\tau,z,w) \in \mathbb{H}_{\Lambda}$. (There is again a boundedness condition at cusps which is automatic by Koecher's principle.)

Hermitian modular forms for $\mathrm{SU}_{2,2}(\mathcal{O}_K)$ are more or less the same as orthogonal modular forms for the lattice of integers $(\Lambda,Q) = (\mathcal{O}_K,N_{K/\mathbb{Q}})$ of $K$. One way to see this is as follows. The complex space of antisymmetric $(4 \times 4)$-matrices admits a nondegenerate quadratic form $\mathrm{pf}$ (the \emph{Pfaffian}, a square root of the determinant) which is preserved under the conjugation action $M \cdot X = M^T X M$ by $\mathrm{SL}_4(\mathbb{C})$; explicitly, $$\mathrm{pf} \begin{psmallmatrix} 0 & a & b & c \\ -a & 0 & d & e \\ -b & -d & 0 & f \\ -c & -e & -f & 0 \end{psmallmatrix} = af - be + cd.$$ The conjugation action identifies $\mathrm{SL}_4(\mathbb{C})$ with the spin group $\mathrm{Spin}(\mathrm{pf}) = \mathrm{Spin}_6(\mathbb{C})$. The six-dimensional real subspace $$V =  \Big\{ \begin{psmallmatrix} 0 & a & b & c \\ -a & 0 & d & -\overline{b} &  \\ -b & -d & 0 & f \\ -c & \overline{b} & -f & 0 \end{psmallmatrix}: \; a,c,d,f \in \mathbb{R}, \, b \in \mathbb{C}\Big\}$$ on which the Pfaffian has signature $(4,2)$ is preserved under conjugation by $\mathrm{SU}_{2,2}(\mathbb{C})$, and this action realizes the isomorphism $\mathrm{SU}_{2,2}(\mathbb{C}) \cong \mathrm{Spin}_{4,2}(\mathbb{R})$. The lattice of $\mathcal{O}_K$-integral matrices (which is isometric to $\mathcal{O}_K \oplus \mathrm{II}_{2,2}$) is preserved by $\mathrm{SU}_{2,2}(\mathcal{O}_K)$ and we obtain an embedding of $\mathrm{SU}_{2,2}(\mathcal{O}_K)$ in the discriminant kernel $\Gamma_{\mathcal{O}_K}$. This isomorphism induces an identification between the homogeneous spaces $\mathbf{H}_2$ and $\mathbb{H}_{\Lambda}$ and allows orthogonal modular forms to be interpreted as Hermitian modular forms of the same weight.



The discriminant kernel $\Gamma_{\mathcal{O}_K}$ contains the involution $\alpha \mapsto \overline{\alpha}$ of $\mathcal{O}_K$ (in other words, $\alpha - \overline{\alpha} \in \mathcal{O}_K$ for all $\alpha$ in the codifferent $\mathcal{O}_K^{\#}$), and this involution does not come from the action of $\mathrm{SU}_{2,2}(\mathcal{O}_K)$. This means that Hermitian modular forms which arise from orthogonal modular forms are either symmetric or skew-symmetric: 

\begin{defn} A Hermitian modular form $F : \mathbf{H}_2 \rightarrow \mathbb{C}$ of weight $k$ is \emph{(graded) symmetric} if $$F(z^T) = (-1)^k F(z) \; \text{for all} \; z \in \mathbf{H}_2,$$ and \emph{(graded) skew-symmetric} if $F(z^T) = -(-1)^k F(z)$.
\end{defn}
Note that many references (e.g. \cite{DK1},\cite{DK2}) use the notion of (skew)-symmetry without respect to the grading, i.e. without the factor $(-1)^k$.

The \emph{maximal} discrete extension $\Gamma_K^*$ of $\Gamma_K$ (as computed in \cite{KRW}) also contains a copy of the class group $\mathrm{Cl}(\mathcal{O}_K)$ which is generally not contained in the discriminant kernel. We only consider the fields $K = \mathbb{Q}(\sqrt{-7}),\mathbb{Q}(\sqrt{-11})$ of class number one so we will not discuss this point further; however, if one were to extend the arguments below to general number fields then most instances of the discrete extension $\Gamma_{\mathcal{O}_K}$ of $\Gamma_K$ below should probably be replaced by $\Gamma_K^*$.

\subsection{Heegner divisors} On orthogonal Shimura varieties there is a natural construction of \emph{Heegner divisors}. Suppose $\Lambda$ is an even lattice of signature $(\ell,2)$. Given any lattice vector $\lambda \in \Lambda$ of positive norm, consider the orthogonal complement $\lambda^{\perp} \cap \mathbb{H}_{\Lambda}$ which has codimension one. The union of these orthogonal complements as $\lambda$ ranges through the (finitely many) primitive lattice vectors of a given norm $D$ is $\Gamma_{\Lambda}$-invariant and defines an analytic cycle $\mathcal{H}_D$ on $\overline{\Gamma_{\Lambda} \backslash \mathbb{H}_{\Lambda}}$. (If we do not take only primitive vectors then we obtain the divisors $\sum_{f^2 | D} \mathcal{H}_{D/f^2}$, which are also often called the Heegner divisors in the literature. For our purposes this definition is less convenient.)

The irreducible components $\mathcal{H}_{D,\pm \beta}$ of $\mathcal{H}_D$ correspond to pairs $(\pm \beta) \in \Lambda'/\Lambda$ of norm $D / \mathrm{disc}(\Lambda)$. In particular when $\mathrm{disc}(\Lambda)$ is prime then every $\mathcal{H}_D$ is irreducible.

Each Heegner divisor is itself an orthogonal Shimura variety for a lattice of signature $(2,\ell - 1)$. (For example, in the Hermitian modular form case the Heegner divisor $\mathcal{H}_D$ may be identified with the paramodular threefold $X_{K(D)}$ of level $D$ modulo Atkin-Lehner involutions.) Moreover the intersection of any two Heegner divisors is itself a Heegner divisor in this interpretation. The intersection numbers can be computed in general by counting certain lattice embeddings up to equivalence. However it seems worthwhile to mention a trick which (in the cases we will need) makes this computation quite easy and which works in some generality.

A special case of Borcherds' higher-dimensional Gross-Kohnen-Zagier theorem \cite{BGKZ} shows that the Heegner divisors on $\Gamma_K \backslash \mathbf{H}_2$ interpreted appropriately are coefficients of a modular form of weight $3$. If $K$ has prime discriminant $d_K < 0$, and we take intersection numbers with a fixed Heegner divisor of squarefree discriminant $m \in \mathbb{N}$ and apply the Bruinier-Bundschuh isomorphism (see \cite{BB}, or Remark 3 below) then this implies that there are weights $\alpha_m(D)$, $D \in \mathbb{N}$ such that $$\Phi_m(\tau) := -1 + \sum_{D=1}^{\infty} \alpha_m(D) \sum_{f^2 | D} (\mathcal{H}_m \cdot \mathcal{H}_{D/f^2}) q^D \in M_3^+(\Gamma_0(-d_K),\chi),$$ where $\chi$ is the quadratic Dirichlet character modulo $d_K$, and where $M_3^+(\Gamma_0(-d_K),\chi)$ is the subspace of weight three modular forms of level $\Gamma_0(-d_K)$ whose Fourier expansions at $\infty$ are supported on exponents which are quadratic residues. Moreover the sums $\sum_{f^2 | D} \alpha_m(D/f^2)$ themselves (for fixed $m$) are coefficients of a modular form of weight $5/2$ and level $\Gamma_0(4m)$ satisfying the  Kohnen plus-condition and which has constant term $-1$ (and for $m=1,2,3$ this determines it uniquely); for example, $$-1 + \sum_{D=1}^{\infty} \sum_{f^2 | D} \alpha_1(D/f^2) q^D = -1 + 10q + 70q^4 + 48q^5 + 120q^8 + 250q^9 + ... = 6 \frac{\theta'(\tau)}{2\pi i} - E_2(4 \tau) \theta(\tau),$$ $$-1 + \sum_{D=1}^{\infty} \sum_{f^2 | D} \alpha_2(D/f^2) q^D = -1 + 4q + 22q^4 + 24q^8 + 100q^9 + ... = 3 \frac{\theta'(\tau)}{2\pi i} - E_2(8\tau) \theta(\tau),$$ $$-1 + \sum_{D=1}^{\infty} \sum_{f^2 | D} \alpha_3(D/f^2) q^D = -1 + 2q + 14q^4 + 34q^9 + 24q^{12} + ... = 2 \frac{\theta'(\tau)}{2\pi i} - E_2(12 \tau) \theta(\tau),$$ where $\theta(\tau) = 1 + 2q + 2q^4 + 2q^9 + ...$ is the usual theta function and where $E_2(\tau) = 1 - 24 \sum_{n=1}^{\infty} \sigma_1(n) q^n$.

Unfortunately the spaces $M_3^+(\Gamma_0(-d_K),\chi)$ are two-dimensional for $d_K \in \{-7,-11\}$. However one can specify the correct modular forms more precisely by observing that the intersections in cohomology are themselves the Fourier coefficients of a vector-valued Jacobi form of index $m/|d_K|$ and weight three (for a particular representation of the Jacobi group) and the intersection numbers are obtained by setting the elliptic variable of that Jacobi form to zero. (More precisely these Jacobi forms occur as Fourier-Jacobi coefficients of the Siegel modular form introduced by Kudla-Millson in \cite{KM}.) For $m \le 3$ the relevant space of Jacobi forms is always one-dimensional (for \emph{every} $d_K$), spanned by the Eisenstein series (for which some computational aspects are discussed in \cite{W}) so the generating series of intersection numbers is exactly what was called the \emph{Poincar\'e square series} of index $m/|d_K|$ in \cite{W}. In this way we can compute the relevant intersection numbers without computing any intersections. We find:

\noindent (1) For $K = \mathbb{Q}(\sqrt{-7})$, $$\Phi_1(\tau) = -1 - 2q + 20q^2 + 18q^4 + 70q^7 + 160q^8 + 94q^9 + ...$$ and $$\Phi_2(\tau) = -1 + 4q + 2q^2 + 48q^4 + 28q^7 + 142q^8 + 148q^9 + ...$$

\noindent (2) For $K = \mathbb{Q}(\sqrt{-11})$, $$\Phi_1(\tau) = -1 - 2q + 20q^3 - 2q^4 + 20q^5 + 18q^9 + 70q^{11} + ...$$ and $$\Phi_3(\tau) = -1 + 2q + 0q^3 + 14q^4 + 16q^5 + 82q^9 + 26q^{11} + ...$$

It follows that for $K = \mathbb{Q}(\sqrt{-7})$, the intersection of $\mathcal{H}_1$ and $\mathcal{H}_2$ as a Heegner divisor of $X_{K(1)}$ is $2H_1$ and as a Heegner divisor of $X_{K(2)}$ is just $H_1$ itself; and for $K = \mathbb{Q}(\sqrt{-11})$ the intersection of $\mathcal{H}_1$ and $\mathcal{H}_3$ in $X_{K(1)}$ is $2 H_1$ and in $X_{K(2)}$ is $H_1$. This means, for example, that if $F$ is a Hermitian modular form for $\mathcal{O}_K$, $K = \mathbb{Q}(\sqrt{-7})$ with a zero on $\mathcal{H}_2$, then the pullbacks of all orders to $\mathcal{H}_1$ are Siegel modular forms of degree two with at least a double zero along the diagonal.

\subsection{Lifts} To construct generators we make use of two lifts from elliptic modular forms: the \emph{Maass lift} (or additive theta lift) and the \emph{Borcherds lift} (or multiplicative theta lift). Both theta lifts most naturally take vector-valued modular forms which transform under a Weil representation as inputs.

Recall that if $(\Lambda,Q)$ is an even-dimensional even lattice with dual $\Lambda'$ then there is a representation $\rho^*$ of $\mathrm{SL}_2(\mathbb{Z})$ on $\mathbb{C}[\Lambda'/\Lambda] = \mathrm{span}(\mathfrak{e}_{\gamma}: \; \gamma \in \Lambda'/\Lambda)$ defined by $$\rho^* \left( \begin{psmallmatrix} 0 & -1 \\ 1 & 0 \end{psmallmatrix} \right) \mathfrak{e}_{\gamma} = \frac{e^{-\pi i \mathrm{sig}(\Lambda)/4}}{\sqrt{|\Lambda'/\Lambda|}} \sum_{\beta \in \Lambda'/\Lambda} e^{2\pi i \langle \beta, \gamma \rangle} \mathfrak{e}_{\beta}, \; \; \rho^* \left( \begin{psmallmatrix} 1 & 1 \\ 0 & 1 \end{psmallmatrix} \right) \mathfrak{e}_{\gamma} = e^{-2\pi i Q(\gamma)} \mathfrak{e}_{\gamma}.$$ We consider holomorphic functions $F : \mathbb{H} \rightarrow \mathbb{C}[\Lambda'/\Lambda]$ which satisfy the functional equations $$F \left( \frac{a \tau + b}{c \tau + d} \right) = (c \tau + d)^k \rho^*\left( \begin{psmallmatrix} a & b \\ c & d \end{psmallmatrix} \right)$$ for all $\begin{psmallmatrix} a & b \\ c & d \end{psmallmatrix} \in \mathrm{SL}_2(\mathbb{Z})$. These are called \emph{nearly-holomorphic modular forms} if they have finite order at $\infty$ (in other words, $F(x+iy)$ has at worst exponential growth as $y \rightarrow \infty$), and are \emph{(holomorphic) modular forms} or \emph{cusp forms} if $F(x+iy)$ is bounded or tends to zero in that limit, respectively. The functional equation under $T = \begin{psmallmatrix} 1 & 1 \\ 0 & 1 \end{psmallmatrix}$ implies a Fourier expansion of the form $$F(\tau) = \sum_{\gamma \in \Lambda'/\Lambda} \sum_{n \in \mathbb{Z}^n - Q(\gamma)} c(n,\gamma) q^n \mathfrak{e}_{\gamma}$$ where $q = e^{2\pi i \tau}$ and $c(n,\gamma) \in \mathbb{C}$. Then $F$ is a nearly-holomorphic modular form if and only if $c(n,\gamma) = 0$ for all sufficiently small $n$; a holomorphic modular form if and only if $c(n,\gamma) = 0$ for all $n < 0$; and a cusp form if and only if $c(n,\gamma) = 0$ for all $n \le 0$.

Now suppose $\Lambda$ is positive-definite and that $k \ge \frac{1}{2} \mathrm{dim}\, \Lambda$, $k \in \mathbb{Z}$. The \emph{Maass lift} takes a vector-valued modular form $F(\tau) = \sum_{\gamma,n} c(n,\gamma) q^n \mathfrak{e}_{\gamma}$ of weight $\kappa = k - \frac{1}{2}\mathrm{dim}\, \Lambda$ for $\rho^*$ to the orthogonal modular form $$\Phi_F(\tau,z,w)  = -\frac{B_k}{2k} c(0,0) \Big( E_k(\tau) + E_k(w) - 1 \Big) + \sum_{a,b=1}^{\infty} \sum_{\substack{\lambda \in \Lambda' \\ \lambda \; \text{positive} \\ Q(\lambda) \le ab}} \sum_{n=1}^{\infty} c(ab - Q(\lambda), \lambda) n^{k-1} e^{2\pi i n (a \tau + b w + \langle \lambda, z \rangle)}$$ for $\Lambda \oplus \mathrm{II}_{2,2}$, where $E_k(\tau), E_k(w)$ denote the Eisenstein series of weight $k$ for $\mathrm{SL}_2(\mathbb{Z})$. (If $k$ is odd then $c(0,0) = 0$ so there is no need to define $E_k$.) The Maass lift is additive and preserves the subspace of cusp forms.

The second lift we use is the Borcherds lift, which takes a \emph{nearly-holomorphic} vector-valued modular form $F(\tau) = \sum_{\gamma,n} c(n,\gamma) q^n \mathfrak{e}_{\gamma}$ of weight $-\frac{1}{2}\mathrm{dim}\, \Lambda$ (where we again take $\Lambda$ to be positive-definite) and yields a multivalued meromorphic orthogonal modular form (in general with character) which is locally represented as a convergent infinite product: $$\Psi_F(\tau,z,w) = e^{2\pi i (A \tau + \langle B,z \rangle + Cw)} \prod_{a,b,\lambda} (1 - e^{2\pi i (a\tau + bw + \langle \lambda,z \rangle)})^{c(ab - Q(\lambda),\lambda)}.$$ There is an analogy to the formal $k=0$ case of the Maass lift; however, the set over which $a,b,\lambda$ is more complicated (depending on a \emph{Weyl chamber} containing $(\tau,z,w)$) and the \emph{Weyl vector} $(A,B,C)$ has no analogue in the additive lift. The most important aspect of the Borcherds lift for us is not the product expansion but the fact that the divisor of $\Psi_F$ may be computed exactly: it is supported on Heegner divisors, and the order of $\Psi_F$ on the rational quadratic divisor $\lambda^{\perp}$ (with $Q(\lambda) < 0$) is $$\mathrm{ord}(\Psi_F; \lambda^{\perp}) = \sum_{r \in \mathbb{Q}_{>0}} c(r^2 Q(\lambda), r\lambda)$$ (where $c(r^2 Q(\lambda),r \lambda) = 0$ if $r \lambda \not \in \Lambda'$). In particular $\Psi_F$ is an orthogonal modular form if and only if these orders are nonnegative integers. In all cases the weight of $F$ is $c(0,0)/2$.

\begin{rem} One can always compactify $\Gamma_{\Lambda} \backslash \mathbb{H}_{\Lambda}$ by including finitely many zero-dimensional and one-dimensional cusps (corresponding to isotropic one-dimensional or two-dimensional sublattices of $\Lambda \oplus \mathrm{II}_{2,2}$ up to equivalence). If $K$ has class number one (or slightly more generally if the norm form on $\mathcal{O}_K$ is alone in its genus) then our discriminant kernel $\Gamma_{\mathcal{O}_K}$ admits only one equivalence class each of zero-dimensional and one-dimensional cusps and both are contained in the closure of every rational quadratic divisor. In particular any Borcherds product which is holomorphic is automatically a cusp form. (This is peculiar to the lattices considered here; it is certainly not true in general.)
\end{rem}

\begin{rem} Let us say a few words about the input functions $F$. A general method to compute vector-valued modular forms for general lattices was given in \cite{W} and \cite{W2} (the two references corresponding to even and odd-weight theta lifts, respectively), and this is what was actually used in the computations below because the implementation was already available. Of course one can obtain all nearly-holomorphic modular forms by dividing true modular forms of an appropriate weight by a power of the discriminant $\Delta(\tau) = q \prod_{n=1}^{\infty} (1 - q^n)^{24}$. However a few other formalisms apply to the particular lattices $\Lambda = (\mathcal{O}_K,N_{K/\mathbb{Q}})$ considered here: \\

\noindent (i) Modular forms for the representation $\rho^*$ attached to a positive-definite lattice $\Lambda$ are equivalent to \emph{Jacobi forms of lattice index} which are scalar-valued functions $\phi(\tau,z)$ in a ``modular variable" $\tau \in \mathbb{H}$ and an ``elliptic variable" $z \in \Lambda \otimes \mathbb{C}$ satisfying certain functional equations and growth conditions. The main advantage of Jacobi forms is that they can be multiplied: for example, in many cases it is possible to construct all Jacobi forms of a given weight and level by taking linear combinations of products of Jacobi theta functions at various arguments (i.e. theta blocks).\\

\noindent (ii) If $\Lambda$ has odd prime discriminant $p$ and $k + (\mathrm{dim}\, \Lambda)/2$ is even then Bruinier and Bundschuh show in \cite{BB} that vector-valued modular forms of weight $k$ for $\rho^*$ can be identified with either a ``plus-" or ``minus-" subspace of $M_k(\Gamma_0(p),\chi_p)$ (where $\chi_p$ is the nontrivial quadratic character mod $p$), i.e. the subspace of modular forms whose Fourier coefficients are supported on quadratic residues modulo $p$, or quadratic nonresidues mod $p$ and $p\mathbb{Z}$, respectively. The isomorphism simply identifies the form $F(\tau) = \sum_{\gamma,n} c(n,\gamma) q^n \mathfrak{e}_{\gamma}$ with $$\sum_{\gamma,n} c(n,\gamma) q^{pn} \in M_k(\Gamma_0(p),\chi_p).$$ This fails when $k + (\mathrm{dim}\, \Lambda)/2$ is odd (in which case $c(n,\gamma) = -c(n,-\gamma)$, so the resulting sum is always zero!). To obtain any results in the the same spirit, it seems necessary to consider instead the ``twisted sums" $$\sum_{\gamma,n} c(n,\gamma) \chi(\gamma) q^{pn},$$ where $\chi$ is an odd Dirichlet character mod $p$ (and where an isomorphism $\Lambda'/\Lambda \cong \mathbb{Z}/p\mathbb{Z}$ has been fixed). The result is a modular form of level $\Gamma_0(p^2)$ with character $\chi \otimes \chi_p$. These maps were studied in \cite{SW}; they are injective and their images can be characterized in terms of the Atkin-Lehner involutions modulo $p^2$.
\end{rem}

\subsection{Pullbacks} Let $\lambda \in \mathcal{O}_K$ have norm $\ell = N_{K/\mathbb{Q}} \lambda$, and consider the embedding of the Siegel upper half-space into $\mathbf{H}_2$: $$\phi : \mathbb{H}_2 \longrightarrow \mathbf{H}_2, \; \; \phi \left( \begin{psmallmatrix} \tau & z \\ z & w \end{psmallmatrix} \right) = \begin{psmallmatrix} \tau & \overline{\lambda} z \\ \lambda z & \ell w \end{psmallmatrix} = U_{\lambda} \cdot \begin{psmallmatrix} \tau & z \\ z & w \end{psmallmatrix}, \; U_{\lambda} := \mathrm{diag}(1,\lambda,1,\lambda/\ell).$$ For any paramodular matrix $$M \in K(\ell) := \{M \in \mathrm{Sp}_4(\mathbb{Q}): \; \sigma_{\ell}^{-1} M \sigma_{\ell} \in \mathbb{Z}^{4 \times 4} \}, \; \sigma_{\ell} := \mathrm{diag}(1,1,1,\ell),$$ we find $U_{\lambda} M U_{\lambda}^{-1} \in \mathrm{SU}_{2,2}(\mathcal{O}_K)$ and $$\phi(M \cdot \tau) = (U_{\lambda} M U_{\lambda}^{-1}) \cdot \phi(\tau), \; \tau \in \mathbb{H}_2,$$ so $\phi$ descends to an embedding of $K(\ell) \backslash \mathbb{H}_2$ into $\Gamma_K \backslash \mathbf{H}_2$ (and more specifically into the Heegner divisor of discriminant $\ell$). In particular if $F : \mathbf{H}_2 \rightarrow \mathbb{C}$ is a Hermitian modular form then $f := F \circ \phi$ is a paramodular form of the same weight, i.e. $$f(M \cdot \tau) = (c \tau + d)^k f(\tau) \; \text{for all} \; M = \begin{psmallmatrix} a & b \\ c & d \end{psmallmatrix} \in K(\ell) \; \text{and} \; \tau \in \mathbb{H}_2.$$

The preprint \cite{W3} gives expressions in the higher Taylor coefficients about a rational quadratic divisor which yield ``higher pullbacks" $P_N F$, $N \in \mathbb{N}_0$. If $F$ is a Hermitian modular form of weight $k$ then its pullback $P_N^{\mathcal{H}_{\ell}} F$ along the embedding above is a paramodular form of level $K(\ell)$ and weight $k+N$ and a cusp form if $N > 0$. The higher pullbacks of theta lifts are themselves theta lifts and are particularly simple to compute. One computational aspect of the higher pullbacks worth mentioning is that a form $F$ vanishes to some order $h$ along the rational quadratic divisor if and only if its pullbacks $P_N F$, $N < h$ are identically zero, and this can be checked rigorously using Sturm bounds (or their generalizations) for the lower-dimensional group under which $P_N F$ transforms.

An important case is the $N^{\mathrm{th}}$ pullback of a modular form $F$ to a Heegner divisor along which it has order exactly $N$. The result in this case is the well-known \emph{quasi-pullback} and we denote it $\mathrm{Q}F$. The quasi-pullback is multiplicative i.e. $\mathrm{Q}(FG) = \mathrm{Q}F \cdot \mathrm{Q}G$ for all Hermitian modular forms $F,G$.



\section{Paramodular forms of levels one, two and three}

The pullbacks of Hermitian modular forms to certain Heegner divisors have interpretations as paramodular forms (as in subsection 2.5 above). Structure results for graded rings of paramodular forms are known for a few values of $N$. We will rely on the previously known generators for the graded rings of paramodular levels 1,2 and 3. The first of these is now classical and was derived by Igusa \cite{Ig}; the second was computed in \cite{IO} by Ibukiyama and Onodera; and the third was computed by Dern \cite{Dern2}. For convenience we express the generators as Gritsenko lifts or Borcherds products. (Igusa and Ibukiyama--Onodera expressed them in terms of thetanulls.)

\begin{prop} (i) There are cusp forms $\psi_{10},\psi_{12},\psi_{35}$ of weights $10,12,35$ such that $M_{\ast}(K(1))$ is generated by the Eisenstein series $E_4,E_6$ and by $\psi_{10},\psi_{12},\psi_{35}$. \\ (ii) There are graded-symmetric cusp forms $\phi_8,\phi_{10},\phi_{11},\phi_{12}$ of weights $8,10,11,12$ and an antisymmetric non-cusp form $f_{12}$ such that $M_{\ast}(K(2))$ is generated by the Eisenstein series $E_4,E_6$ and by $\phi_8,\phi_{10},\phi_{11},\phi_{12},f_{12}$. \\ (iii) There are graded-symmetric cusp forms $\varphi_6,\varphi_8,\varphi_9,\varphi_{10},\varphi_{11},\varphi_{12}$ of weights $6,8,9,10,11,12$ and an antisymmetric non-cusp form $f_{12}$ such that $M_{\ast}(K(3))$ is generated by the Eisenstein series $E_4,E_6$ and by $\varphi_6,\varphi_8,\varphi_9,\varphi_{10},\varphi_{11},\varphi_{12},f_{12}$.
\end{prop}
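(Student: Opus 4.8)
The three structure theorems themselves are the results of Igusa \cite{Ig}, Ibukiyama--Onodera \cite{IO} and Dern \cite{Dern2}; what remains is to exhibit the stated generators in the claimed form. The strategy is uniform across the three levels. First I would assemble explicit candidate generators. At every level $N \in \{1,2,3\}$ the Eisenstein series $E_4, E_6$ of $\mathrm{Sp}_4(\mathbb{Z}) \subseteq K(N)$ are available. For each graded-symmetric cusp form of even weight $k$ in the list I would take a Gritsenko lift of a Jacobi cusp form of weight $k$ and index $N$, equivalently of a vector-valued cusp form of weight $k - 1/2$ for the Weil representation attached to $\langle 2N \rangle$, computed by the algorithm of \cite{W}; one checks that the relevant Jacobi cusp form space is nonzero (and, where the list demands it, that the lift can be chosen with prescribed leading Fourier--Jacobi coefficient). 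The remaining generators — the odd-weight forms $\psi_{35}$, $\varphi_9$, $\phi_{11}$, $\varphi_{11}$, and the antisymmetric non-cusp forms $f_{12}$ — are produced by the Borcherds lift: one chooses a nearly-holomorphic vector-valued input of weight $-1/2 = -\tfrac12 \dim \langle 2N \rangle$ with a prescribed small principal part, uses the divisor formula of subsection 2.5 to verify that the resulting product is holomorphic with divisor an effective multiple of the Humbert divisor $\mathcal{H}_1$ (together with its paramodular translates), and reads off the weight as $c(0,0)/2$. For $N=1$ this recovers $\psi_{35}$ as the classical Igusa cusp form, whose divisor is supported on the diagonal; for $N = 2, 3$ it supplies the antisymmetric generator $f_{12}$.

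Next I would verify that these candidates generate. The cited theorems determine $\dim M_k(K(N))$ for all $k$ via the Hilbert series of the known presentation, so it suffices to check, in each weight up to (a bounded multiple of) the largest generator weight, that the monomials in the candidates span a space of the predicted dimension — a finite linear-algebra computation with $q$-expansions, made rigorous by a Sturm-type bound for $K(N)$. Equivalently one writes the classical thetanull generators of \cite{Ig}, \cite{IO}, \cite{Dern2} as explicit polynomials in the candidates and conversely, which certifies that the two generating sets produce the same graded ring; the relations ($\psi_{35}^2$, resp. $f_{12}^2$ and the products of $f_{12}$ with the symmetric generators, as polynomials in the remaining generators) are then inherited from the known presentations.

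The main obstacle is the bookkeeping at levels $2$ and $3$: there one must confirm existence of the Borcherds input with the desired principal part, which requires controlling the obstruction space (cusp forms of the complementary weight for the dual Weil representation), and one must organize the dimension-and-relation check with up to seven generators at level $3$ and a complicated relation ideal. For level $1$ essentially nothing new is needed: $\psi_{10}$ and $\psi_{12}$ are the Saito--Kurokawa lifts of the weight $10$ and $12$ Jacobi cusp forms of index $1$, and $\psi_{35}$ is the standard Borcherds product of weight $35$ with divisor on $\mathcal{H}_1$, so part (i) follows immediately from \cite{Ig}.
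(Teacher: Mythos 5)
Your proposal matches the paper's treatment: the proposition is precisely the structure theorems of Igusa \cite{Ig}, Ibukiyama--Onodera \cite{IO} and Dern \cite{Dern2}, cited as such, and the paper likewise only adds a choice of concrete generators realized as Gritsenko lifts and Borcherds products in place of thetanulls. One small correction to your realization scheme: in the paper only $\psi_{35}$ and the two antisymmetric forms $f_{12}$ are Borcherds products, while the odd-weight symmetric generators $\phi_{11}$, $\varphi_9$, $\varphi_{11}$ are Gritsenko lifts of odd-weight Jacobi cusp forms, not Borcherds lifts.
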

For later use, we fix the following concrete generators. Let $E_4,E_6$ denote the modular Eisenstein series; $E_{k,m}$ the Jacobi Eisenstein series of weight $k$ and index $m$; and $E_{k,m}'$ its derivative with respect to $z$. The inputs into the Gritsenko and Borcherds lifts are expressed as Jacobi forms following Remark 3 above. \\

(i) $\psi_{10}$ and $\psi_{12}$ are the Gritsenko lifts of the Jacobi cusp forms $$\varphi_{10,1}(\tau,z) = \frac{E_{4,1}E_6 - E_4 E_{6,1}}{144} \; \text{and} \; \varphi_{12,1}(\tau,z) =  \frac{E_4^2 E_{4,1} - E_6 E_{6,1}}{144}$$ respectively, and $\psi_{35}$ is the Borcherds lift of $\frac{11 E_4^2 E_{4,1} + 7 E_6 E_{6,1}}{18\Delta}$. \\

(ii) $\phi_8,\phi_{10},\phi_{11},\phi_{12}$ are the Gritsenko lifts of the Jacobi cusp forms $$\varphi_{8,2} = \frac{E_4 E_{4,2} - E_{4,1}^2}{12}, \; \varphi_{10,2} = \frac{E_{4,2} E_6 - E_{4,1}E_{6,1}}{12}, \; \varphi_{11,2} = \frac{E_{4,1} E_{6,1}' - E_{4,1} E_{6,1}'}{288\pi i}, \; \varphi_{12,2} = \frac{E_4^2 E_{4,2} - E_6 E_{6,2}}{24},$$ respectively, and $f_{12}$ is the Borcherds lift of $\frac{3 E_4^2 E_{4,2} + 4 E_4 E_{4,1}^2 + 5 E_6 E_{6,2}}{12 \Delta}$.  \\

(iii) $\varphi_6,\varphi_8,\varphi_9,\varphi_{10},\varphi_{11},\varphi_{12}$ are the Gritsenko lifts of the Jacobi cusp forms $$\varphi_{6,3} = \frac{\varphi_{10,1}\varphi_{8,2}}{\Delta}, \; \varphi_{8,3} = \frac{E_4 E_{4,3} - E_{4,1} E_{4,2}}{2}, \; \varphi_{9,3} = \frac{\varphi_{10,1} \varphi_{11,2}}{\Delta},$$ $$\varphi_{10,3} = \frac{\varphi_{10,2} \varphi_{12,1}}{\Delta}, \; \varphi_{11,3} = \frac{\varphi_{11,2}\varphi_{12,1}}{\Delta}, \; \varphi_{12,3} = \frac{E_4 E_{4,1}E_{4,2} + E_4^2 E_{4,3}}{2} - E_{6,1} E_{6,2},$$ respectively, and $f_{12}$ is the Borcherds lift of $\frac{2 E_4 E_{4,1} E_{4,2} + 5 E_{4,1}^3 + 5E_{6,1} E_{6,2}}{12 \Delta}.$ (Note that these are not quite the generators used by Dern; the choices used here simplify the ideal of relations somewhat.)

\begin{rem} For later use we will need to understand the ideals of symmetric (under the Fricke involution $\tau \mapsto -\frac{1}{N} \tau^{-1}$) paramodular forms of level $N \in \{1,2,3\}$ which vanish along the diagonal. The pullback of a paramodular form to the diagonal is a modular form for the group $\mathrm{SL}_2(\mathbb{Z}) \times \mathrm{SL}_2(\mathbb{Z})$ or in other words a linear combination of expressions of the form $(f_1 \otimes f_2)(\tau_1,\tau_2) = f_1(\tau_1)f_2(\tau_2)$, where $f_1,f_2$ are elliptic modular forms of level one of the same weight; and if the paramodular form is symmetric then the pullback is symmetric under swapping $(\tau_1,\tau_2) \mapsto (\tau_2,\tau_1)$. The graded ring of symmetric modular forms under $\mathrm{SL}_2(\mathbb{Z}) \times \mathrm{SL}_2(\mathbb{Z})$ is the weighted polynomial ring $$M_{\ast}(\mathrm{SL}_2(\mathbb{Z}) \times \mathrm{SL}_2(\mathbb{Z})) = \mathbb{C}[E_4 \otimes E_4, E_6 \otimes E_6, \Delta \otimes \Delta]$$ where $E_4,E_6,\Delta$ are defined as usual. Therefore: \\

\noindent (i) In level $N=1$, the pullbacks of $E_4,E_6,\psi_{12}$ to the diagonal are the algebraically independent modular forms $E_4 \otimes E_4$, $E_6 \otimes E_6$, $\Delta \otimes \Delta$, so every even-weight form which vanishes on the diagonal is a multiple of $\psi_{10}$ (which has a double zero). The odd-weight form $\psi_{35}$ has a simple zero on the diagonal. \\

\noindent (ii) In level $N=2$, the pullbacks of $E_4,E_6,\phi_{12}$ to the diagonal are algebraically independent, so the ideal of even-weight symmetric forms which vanish on the diagonal is generated by $\phi_8$ (which has a fourth-order zero there) and $\phi_{10}$ (which has a double zero). Moreover $\phi_{10}^2$ is itself a multiple of $\phi_8$, so the ideal of even-weight modular forms which vanish to order at least three along the diagonal is principal, generated by $\phi_8$. The odd-weight form $\phi_{11}$ has a simple zero along the diagonal. \\

\noindent (iii) In level $N=3$, the pullbacks of $E_4,E_6,\varphi_{12}$ to the diagonal are algebraically independent, so the ideal of even-weight symmetric forms which vanish on the diagonal is generated by $\varphi_6,\varphi_8,\varphi_{10}$ (which have zeros of order $6,4,2$ respectively). These forms satisfy $\varphi_8^2 = \varphi_6 \varphi_{10}$ and $\varphi_{10}^2 = \varphi_8 \varphi_{12}$, so the ideals of (even-weight, symmetric) forms which vanish to order at least $3$ or at least $5$ are $\langle \varphi_6,\varphi_8 \rangle$ and $\langle \varphi_6 \rangle$, respectively. The odd-weight forms $\varphi_9$ and $\varphi_{11}$ have order $3$ and $1$ along the diagonal, respectively, and satisfy the relations $$\varphi_6 \varphi_{11} = \varphi_8 \varphi_9, \; \varphi_8 \varphi_{11} = \varphi_9 \varphi_{10},$$ and $\varphi_{11}^3$ and $\varphi_{10} \varphi_{11}$ (and therefore all odd-weight symmetric forms with at least a triple zero on the diagonal) are multiples of $\varphi_9$.
\end{rem}

\section{Hermitian modular forms for $\mathbb{Q}(\sqrt{-7})$}

In this section we compute the graded ring of Hermitian modular forms for the maximal order in $K = \mathbb{Q}(\sqrt{-7})$ by studying the pullbacks to Heegner divisors of discriminant $1$ and $2$ and applying the structure theorems of Igusa and Ibukiyama-Onodera. We first consider graded-symmetric forms and reduce against a distinguished Borcherds product $b_7$ (which is also a Maass lift) whose divisor is $$\mathrm{div} \, b_7 = 3\mathcal{H}_1 + \mathcal{H}_2.$$ We will express all graded-symmetric forms in terms of Maass lifts $\mathcal{E}_4,\mathcal{E}_6,b_7,m_8,m_9,m_{10}^{(1)},m_{10}^{(2)},m_{11},m_{12}$ in weights $4,6,7,8,9,10,10,11,12$ which are described in more detail on the next page. The Maass lifts of weight $4,6,7,8,9$ are essentially unique, and the Maass lifts of weight $10$ are chosen such that $m_{10}^{(1)}$ vanishes on $\mathcal{H}_1$ and $m_{10}^{(2)}$ vanishes on $\mathcal{H}_2$. By contrast $m_{11}$ could have been chosen almost arbitrarily (so long as it is not a multiple of $\mathcal{E}_4 b_7$, which is also a Maass lift), and similarly for $m_{12}$.

\afterpage{%
\clearpage
\begin{landscape}
In Table 1 we describe the even-weight Maass lifts used as generators. For each Maass lift of weight $k$ we give its input form (in the convention of Bruinier-Bundschuh; this is a modular form of weight $k-1$ and level $\Gamma_0(7)$ for the quadratic character) and its first pullbacks to the Heegner divisors of discriminant $1$ and $2$. (The pullbacks of odd order to $\mathcal{H}_1$ are always zero and therefore omitted.)
\begin{table}[htbp]
\centering
\caption{Maass lifts in even weight}
\begin{tabular}{l*{8}{c}r}
\hline
Name & Weight & Input form & $P_0^{\mathcal{H}_1}$ & $P_2^{\mathcal{H}_1}$ & $P_4^{\mathcal{H}_1}$ & $P_0^{\mathcal{H}_2}$ & $P_1^{\mathcal{H}_2}$\\
\hline
\hline
$\mathcal{E}_4$  & 4 & $1 + 14q^3 + 42q^5 + 70q^6 + 42q^7 + 210q^{10} \pm ...$ & $E_4$ & $0$ & $0$ & $E_4$ & $0$ \\
\hline
$\mathcal{E}_6$   & 6 & $1 - 10q^3 - 78q^5 - 170q^6 - 150q^7 - 1326q^{10} \pm ...$ & $E_6$ & $0$ & $1814400\psi_{10}$ & $E_6$ & $0$ \\
\hline
$m_8$ & $8$ & $q^3 - q^5 - 8q^6 + 7q^7 + 8q^{10} \pm  ...$ & $0$ & $120 \psi_{10}$ & $4352\psi_{12}$ & $2 \phi_8$ & $0$ \\
\hline
$m_{10}^{(1)}$ & $10$ & $q^3 - q^5 + 16q^6 - 17q^7 - 136q^{10} \pm ... $ & $0$ & $152 \psi_{12}$ & $8736 E_4 \psi_{10}$ & $2 \phi_{10}$ & $24\psi_{11}$ \\
\hline
$m_{10}^{(2)}$ & $10$ & $q^5 - q^6 - q^7 + q^{10} - 16q^{12} \pm ...$ & $2 \psi_{10}$ & $-2\psi_{12}$ & $-420E_4 \psi_{10}$ & 0 & $-4\psi_{11}$ \\
\hline
$m_{12}$ & $12$ & $q^5 + 3q^6 + 7q^7 - 19q^{10} - 72q^{12} \pm ...$ & $2 \psi_{12}$ & $2 E_4 \psi_{10}$ & $134E_4 \psi_{12} - 710E_6 \psi_{10}$  & $\frac{1}{3}\phi_{12} - \frac{1}{3}E_4 \phi_8$ & $0$ \\
\hline
\end{tabular}    
\end{table}

The input functions into the Maass lift in odd weight are given as twisted sums as in \cite{SW}. Here, $\chi$ may be any odd Dirichlet character mod $7$; the input form is then a modular form of level $\Gamma_0(49)$ and character $\chi \otimes \chi_7$ where $\chi_7$ is the quadratic character. The Borcherds product $b_7$ happens to lie in the Maass Spezialschar and is listed in this table.

\begin{table}[htbp]
\centering
\caption{Maass lifts in odd weight}
\begin{tabular}{l*{8}{c}r}
\hline
Name & Weight & Input form & $P_1^{\mathcal{H}_1}$ & $P_3^{\mathcal{H}_1}$ & $P_5^{\mathcal{H}_1}$ & $P_0^{\mathcal{H}_2}$ & $P_1^{\mathcal{H}_2}$\\
\hline
\hline
$b_7$  & $7$ & $\chi(5)q^3 + 3\chi(3)q^5 + 2 \chi(1)q^6 - 6 \chi(5)q^{10} \pm ...$ & $0$ & $-360\psi_{10}$ & $4080\psi_{12}$ & $0$ & $-4\phi_8$ \\
\hline
$m_9$ & $9$ & $\chi(5)q^3 - 9\chi(3)q^5 - 10\chi(1)q^6 - 90\chi(5)q^{10} \pm ...$ & $-24 \psi_{10}$ & $72 \psi_{12}$ & $-21168E_4 \psi_{10}$ & $0$ & $-4\phi_{10}$ \\
\hline
$m_{11}$ & $11$ & $\chi(3)q^5 - 5\chi(1)q^6 + 11\chi(5)q^{10} - 30\chi(3)q^{12} \pm ...$ & $-2\psi_{12}$ & $40 E_4 \psi_{10}$ & $\frac{6290}{3}E_4 \psi_{12} - \frac{9350}{3} E_6 \psi_{10} $ & $6 \phi_{11}$ & $2\phi_{12}$ \\
\hline
\end{tabular}    
\end{table}

The Borcherds products below can be shown to exist by a Serre duality argument as in \cite{BGKZ}.

\begin{table}[htbp]
\centering
\caption{Borcherds products}
\begin{tabular}{l*{4}{c}r}
\hline
Name & Weight & Divisor & Graded-symmetric? \\
\hline
\hline
$b_7$ & $7$ & $3\mathcal{H}_1 + \mathcal{H}_2$ & yes \\
\hline
$b_{28}$ & $28$ & $7\mathcal{H}_1 + \mathcal{H}_7$ & no \\
\hline
\end{tabular}    
\end{table}

\end{landscape}

\clearpage
}

\begin{lem} Let $F$ be a symmetric Hermitian modular form. There is a polynomial $P$ such that $$F - P(\mathcal{E}_4,\mathcal{E}_6,m_8,m_{10}^{(1)},m_{11},m_{12})$$ vanishes along the Heegner divisor $\mathcal{H}_2$.
\end{lem}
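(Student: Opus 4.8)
The plan is to induct on the weight $k$ of the symmetric Hermitian modular form $F$, using the pullback map $P_0^{\mathcal{H}_2}$ to paramodular forms of level $K(2)$ together with the structure of $M_{\ast}(K(2))$ from Proposition 3.1(ii). Since $b_7$ has divisor $3\mathcal{H}_1 + \mathcal{H}_2$, subtracting the correct polynomial combination of generators so that the difference vanishes on $\mathcal{H}_2$ is equivalent to making it divisible by the quasi-pullback data along $\mathcal{H}_2$; the inductive mechanism is that once $F$ vanishes on $\mathcal{H}_2$ we are in a strictly simpler situation (handled by the next lemma, where one divides by $b_7$ after also controlling $\mathcal{H}_1$). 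So here the task is only to kill the $\mathcal{H}_2$-restriction.

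First I would compute $f := P_0^{\mathcal{H}_2} F$, which by subsection 2.5 is a symmetric paramodular form of level $K(2)$ and weight $k$. Because $F$ is graded-symmetric, $f$ lies in the subring of symmetric elements of $M_{\ast}(K(2))$, which by Proposition 3.1(ii) is generated by $E_4, E_6, \phi_8, \phi_{10}, \phi_{11}, \phi_{12}$ (the antisymmetric generator $f_{12}$ does not appear). Now I read off Table 1 and Table 2 the values $P_0^{\mathcal{H}_2}$ of the proposed Maass lifts: $\mathcal{E}_4 \mapsto E_4$, $\mathcal{E}_6 \mapsto E_6$, $m_8 \mapsto 2\phi_8$, $m_{10}^{(1)} \mapsto 2\phi_{10}$, $m_{11} \mapsto 6\phi_{11}$, and $m_{12} \mapsto \tfrac13 \phi_{12} - \tfrac13 E_4 \phi_8$. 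The key point is that these six images generate the full symmetric subring of $M_{\ast}(K(2))$: indeed $E_4, E_6$ give the Eisenstein part, $\phi_8, \phi_{10}, \phi_{11}$ are hit directly up to scalar, and $\tfrac13\phi_{12} - \tfrac13 E_4\phi_8$ together with $E_4 \cdot (\tfrac12 m_8 \mapsto \phi_8)$ recovers $\phi_{12}$. Hence there is a polynomial $P$ in six variables with $P\big(E_4,E_6,2\phi_8,2\phi_{10},6\phi_{11}, \tfrac13\phi_{12}-\tfrac13 E_4\phi_8\big) = f$, and then $F - P(\mathcal{E}_4,\mathcal{E}_6,m_8,m_{10}^{(1)},m_{11},m_{12})$ is a symmetric Hermitian modular form whose $0$-th pullback to $\mathcal{H}_2$ vanishes, i.e.\ which vanishes along $\mathcal{H}_2$.

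The main obstacle is verifying that the six pullback images really do generate the symmetric subring of $M_{\ast}(K(2))$ — in particular that the odd-weight part is exhausted by products involving $\phi_{11}$ with no independent odd generator missing, and that $\phi_{12}$ is genuinely recoverable rather than only $\phi_{12}$ modulo the ideal $(\phi_8)$. Both points follow from Proposition 3.1(ii): the only odd-weight generator of the symmetric subring is $\phi_{11}$, which is hit (up to the scalar $6$) by $m_{11}$, and $E_4 \phi_8$ is available since $\mathcal{E}_4$ and $m_8$ both pull back cleanly, so $\phi_{12} = 3\big(\tfrac13\phi_{12} - \tfrac13 E_4\phi_8\big) + E_4 \phi_8$ is in the image. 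One should also note that a priori a symmetric Hermitian form could have weight high enough that $P$ is not uniquely determined, but uniqueness is not needed — existence of any such $P$ suffices. Finally, it is worth remarking that this argument does not yet use anything about $\mathcal{H}_1$; the finer control on $\mathcal{H}_1$ (and the appearance of $m_9, m_{10}^{(2)}, b_7$) enters only in the subsequent reduction step where one divides by $b_7$.
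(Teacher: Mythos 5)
Your proposal is correct and follows essentially the same route as the paper: the paper's proof is precisely the observation that, by Ibukiyama--Onodera's structure theorem and the $P_0^{\mathcal{H}_2}$ columns of Tables 1 and 2, the pullbacks of $\mathcal{E}_4,\mathcal{E}_6,m_8,m_{10}^{(1)},m_{11},m_{12}$ generate the graded-symmetric subring of $M_{\ast}(K(2))$. You have simply spelled out the verification (in particular the recovery of $\phi_{12}$ from $\tfrac{1}{3}\phi_{12}-\tfrac{1}{3}E_4\phi_8$ and $E_4\phi_8$) that the paper leaves implicit.
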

\begin{proof} This amounts to verifying that the pullbacks of $\mathcal{E}_4,\mathcal{E}_6,m_8,m_{10}^{(1)},m_{11},m_{12}$ generate the ring of symmetric paramodular forms of level $2$, and is clear in view of Ibukiyama-Onodera's structure result and Tables 1 and 2 below.
\end{proof}

\begin{thrm} The graded ring of symmetric Hermitian modular forms for $\mathcal{O}_K$ is generated by Maass lifts $$\mathcal{E}_4,\mathcal{E}_6,b_7,m_8,m_9,m_{10}^{(1)},m_{10}^{(2)}, m_{11}, m_{12}$$ in weight $4,6,7,8,9,10,10,11,12$. The ideal of relations is generated by \begin{align*} m_8 m_9 &= b_7 (m_{10}^{(1)} + 12 m_{10}^{(2)}); \\ m_9^2 + 12 b_7 m_{11} &= \mathcal{E}_4 b_7^2 + 36 m_8 m_{10}^{(2)}; \\ m_9 m_{10}^{(1)} &= b_7 (\mathcal{E}_4 m_8 + 12 m_{12}); \\ \mathcal{E}_6 b_7^2 + 18 m_{10}^{(1)} m_{10}^{(2)} &= \mathcal{E}_4 b_7 m_9 + 6 m_9 m_{11}; \\ m_{10}^{(1)} (m_{10}^{(1)} + 12 m_{10}^{(2)}) &= m_8 (\mathcal{E}_4 m_8 + 12 m_{12}); \\ \mathcal{E}_4 b_7 m_{10}^{(1)} + 6 \mathcal{E}_4 b_7 m_{10}^{(2)} + 72 m_{10}^{(2)} m_{11} &= \mathcal{E}_6 b_7 m_8 + 6 m_9 m_{12}; \\ 3 \mathcal{E}_4 m_8 m_{10}^{(1)} + 6 \mathcal{E}_4 b_7 m_{11} + \mathcal{E}_6 b_7 m_9 + 72 m_{11}^2 &= \mathcal{E}_4^2 b_7^2 + 3 \mathcal{E}_6 m_8^2 + 18 m_{10}^{(1)} m_{12}. \end{align*}
\end{thrm}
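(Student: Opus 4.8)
The plan is to mimic the induction on the weight that proves $M_*(\mathrm{SL}_2(\mathbb{Z}))=\mathbb{C}[E_4,E_6]$, with $b_7$ playing the role of $E_4$ and the pullbacks to the Heegner divisors $\mathcal{H}_1,\mathcal{H}_2$ playing the role of evaluation at the elliptic point; the ideal of relations is then pinned down by a Hilbert--series comparison against the dimension formula of \S6. Write $\mathcal{G}\subseteq M_*(\mathrm{sym})$ for the subalgebra generated by the nine Maass lifts. I would show $M_k(\mathrm{sym})=\mathcal{G}_k$ by induction on $k$. The base is the range $k<7$, where $b_7$ cannot be split off and a direct low-weight estimate of $\dim M_k$ (via Borcherds' obstruction theory for the relevant Weil representation, or the low-weight part of \S6) gives $M_k(\mathrm{sym})=\mathbb{C}[\mathcal{E}_4,\mathcal{E}_6]_k$. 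For $k\ge 7$ take $F\in M_k(\mathrm{sym})$: by the Lemma there is $P_1\in\mathbb{C}[\mathcal{E}_4,\mathcal{E}_6,m_8,m_{10}^{(1)},m_{11},m_{12}]$ with $F-P_1$ vanishing along $\mathcal{H}_2$, and the task is to subtract a further $P_2\in\mathcal{G}$ so that $G:=F-P_1-P_2$ still vanishes along $\mathcal{H}_2$ and vanishes to order $\ge 3$ along $\mathcal{H}_1$. Granting this, $\mathrm{div}\,G\ge 3\mathcal{H}_1+\mathcal{H}_2=\mathrm{div}\,b_7$, so $G=b_7H$ with $H$ holomorphic (indeed an honest modular form, since $b_7$ is also a Maass lift); as $b_7$ is graded-symmetric of odd weight, $H$ is a graded-symmetric Hermitian modular form of weight $k-7$, so $H\in\mathcal{G}$ by induction and $F=P_1+P_2+b_7H\in\mathcal{G}$.

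\emph{Construction of $P_2$.} Since $F-P_1$ vanishes along $\mathcal{H}_2$, the intersection computation of \S2.3 (the class of $\mathcal{H}_1\cap\mathcal{H}_2$ in $X_{K(1)}$ is $2H_1$) forces every pullback $P_N^{\mathcal{H}_1}(F-P_1)$ to be a level-one Siegel modular form with at least a double zero along the diagonal, hence by Igusa's theorem and Remark 4(i) a multiple of $\psi_{10}$. The correction $P_2$ is built only from generators that themselves vanish along $\mathcal{H}_2$, so as not to disturb the first reduction --- $b_7$, $m_9$, $m_{10}^{(2)}$ and the $\mathcal{H}_2$-vanishing combinations of the others --- whose pullbacks to $\mathcal{H}_1$ are read off from Tables 1--2. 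In odd weight $P_0^{\mathcal{H}_1}$ and $P_2^{\mathcal{H}_1}$ vanish automatically by graded symmetry, and correcting $P_1^{\mathcal{H}_1}(F-P_1)$ (a multiple of $\psi_{10}$) by a suitable multiple of $m_9$, for which $P_1^{\mathcal{H}_1}m_9=-24\psi_{10}$, already yields order $\ge 3$. In even weight one first corrects $P_0^{\mathcal{H}_1}$ using $m_{10}^{(2)}$ ($P_0^{\mathcal{H}_1}m_{10}^{(2)}=2\psi_{10}$) and then the second pullback; this last step is where the available generators and the double-zero constraint must be matched with care (one exploits the compatibility, along $\mathcal{H}_1\cap\mathcal{H}_2$, between the $\mathcal{H}_1$- and $\mathcal{H}_2$-jets that $F$ already satisfies).

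\emph{Relations.} Let $R=\mathbb{C}[X_4,X_6,X_7,X_8,X_9,X_{10}',X_{10}'',X_{11},X_{12}]$ with the evident weights and let $I\subseteq R$ be the ideal generated by the seven displayed polynomials. Substituting the corresponding generators sends each to a Hermitian modular form of the stated weight which one checks is identically zero by comparing Fourier--Jacobi coefficients up to a Sturm-type bound for $\mathcal{O}_K\oplus\mathrm{II}_{2,2}$ (equivalently, by comparing the finitely many pullbacks to $\mathcal{H}_1,\mathcal{H}_2$ that determine a form of that weight). This yields a surjection $R/I\twoheadrightarrow M_*(\mathrm{sym})$, which is an isomorphism once $\dim(R/I)_k=\dim M_k(\mathrm{sym})$ for every $k$: the right side is given by the dimension formula of \S6, and the left side by a Gröbner basis computation --- I expect the seven relations, under a suitable monomial order, already to form (or quickly to produce) a Gröbner basis, so that the Hilbert series of $R/I$ is read off from their leading terms. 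Equality of the two Hilbert series forces injectivity, hence completeness of the list of relations.

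\emph{Main obstacle.} The crux is the construction of $P_2$: one must show that the pullbacks of the nine generators to $\mathcal{H}_1$ and $\mathcal{H}_2$ span every system of paramodular forms that is compatible along $\mathcal{H}_1\cap\mathcal{H}_2$. This is a weight-by-weight linear-algebra problem whose solvability rests on reconciling three structure theorems --- Igusa's for $K(1)$, Ibukiyama--Onodera's for $K(2)$, and the $\mathrm{SL}_2(\mathbb{Z})\times\mathrm{SL}_2(\mathbb{Z})$ ring of Remark 4 for the common boundary --- with the Gross--Kohnen--Zagier / Kudla--Millson intersection numbers of \S2.3; the delicate point is that the ``double zero along the diagonal'' constraint on $\mathcal{H}_2$-vanishing forms must be met using only the $\mathcal{H}_2$-vanishing generators, and keeping the $\mathcal{H}_1$- and $\mathcal{H}_2$-corrections from interfering is exactly what makes the order of the reduction steps matter. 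A secondary, but still substantial, point is carrying out the \S6 dimension bookkeeping in tandem with the Gröbner basis of $I$, since the dimension formula and the ring structure are really proved together.
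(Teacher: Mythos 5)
Your overall strategy --- induction on the weight, reduction against $b_7$ after forcing $\mathrm{div}\,F \ge 3\mathcal{H}_1+\mathcal{H}_2$, and pinning down the ideal of relations by comparing the Hilbert series of $R/I$ with the dimension formula of \S 6 --- is exactly the paper's, and your odd-weight reduction (only $P_1^{\mathcal{H}_1}$ needs correcting, and $m_9$ does it) is complete as stated. But the step you flag as the main obstacle, the even-weight second pullback to $\mathcal{H}_1$, is a genuine gap as written, and "compatibility between the $\mathcal{H}_1$- and $\mathcal{H}_2$-jets" does not by itself close it. The problem is quantitative: after correcting $P_0^{\mathcal{H}_1}$ with $m_{10}^{(2)}$, the form has a double zero on $\mathcal{H}_1$ and only a \emph{simple} zero on $\mathcal{H}_2$, so the intersection $\mathcal{H}_1\cdot\mathcal{H}_2 = 2H_1$ only tells you that $P_2^{\mathcal{H}_1}F$ has a double zero on the diagonal, i.e.\ lies in $\psi_{10}\,M_*(\mathrm{Sp}_4(\mathbb{Z}))$. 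The only $\mathcal{H}_2$-vanishing product of generators with a double zero on $\mathcal{H}_1$ available to cancel it is $m_9^2$ (times polynomials in $\mathcal{E}_4,\mathcal{E}_6,m_{10}^{(2)},m_{12}$), and its second pullback is a multiple of $\psi_{10}^2$; so you can only reach the strictly smaller space $\psi_{10}^2\,M_*(\mathrm{Sp}_4(\mathbb{Z}))$, and the reduction stalls.

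The paper's resolution is a two-stage maneuver you need to make explicit. First upgrade the zero on $\mathcal{H}_2$ from simple to double: the first pullback $P_1^{\mathcal{H}_2}F$ is an odd-weight symmetric paramodular form of level $2$ with at least a double zero on the diagonal of $X_{K(2)}$, hence lies in the ideal generated by $\phi_8\phi_{11}$ and $\phi_{10}\phi_{11}$ (Remark 4(ii)); the products $m_8 m_{10}^{(2)}$ and $m_{10}^{(1)}m_{10}^{(2)}$ have exactly these first pullbacks to $\mathcal{H}_2$ up to constants, and each has at least a double zero on $\mathcal{H}_1$, so subtracting $m_8 m_{10}^{(2)}P_1 + m_{10}^{(1)}m_{10}^{(2)}P_2$ yields a form with double zeros on both divisors. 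Only \emph{now} does the intersection number force $P_2^{\mathcal{H}_1}F$ to have a fourth-order zero on the diagonal, i.e.\ to be a multiple of $\psi_{10}^2$, which $m_9^2\,P(\mathcal{E}_4,\mathcal{E}_6,m_{10}^{(2)},m_{12})$ can cancel; the resulting form has order $\ge 3$ on $\mathcal{H}_1$ and order $\ge 2$ on $\mathcal{H}_2$ and is divisible by $b_7$. So the order of operations is not merely a matter of the corrections "not interfering" --- the $\mathcal{H}_2$-correction must come first because it is what creates the extra diagonal vanishing that makes the $\mathcal{H}_1$-correction possible. Your treatment of the relations (verify each identity by Fourier expansion up to a Sturm bound, then match Hilbert series) agrees with the paper and is fine.
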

\begin{proof} We use induction on the weight. As usual any modular form of negative or zero weight is constant. \\

Using the previous lemma we may assume that $F$ has a zero along $\mathcal{H}_2$. Since $\mathcal{H}_2$ has a double intersection with $\mathcal{H}_1$ along its diagonal $H_1$ it follows that the pullbacks of $F$ to $\mathcal{H}_1$ of all orders have (at least) a double zero along the diagonal; in particular, they are multiples of the Igusa discriminant $\psi_{10}$. 

Since the pullbacks of $\mathcal{E}_4,\mathcal{E}_6,m_{10}^{(2)},m_{12}$ to $\mathcal{H}_1$ generate the graded ring of even-weight Siegel modular forms, and $m_{10}^{(2)}$ vanishes along $\mathcal{H}_2$ but pulls back to the Igusa form $\psi_{10}$ on $\mathcal{H}_1$, it follows that we can subtract some expression of the form $$m_{10}^{(2)} P(\mathcal{E}_4,\mathcal{E}_6,m_{10}^{(2)},m_{12})$$ away from $F$ to obtain a form whose pullbacks to both $\mathcal{H}_1$ and $\mathcal{H}_2$ are zero. Similarly, we can subtract some expression of the form $$m_9 P(\mathcal{E}_4,\mathcal{E}_6,m_{10}^{(2)},m_{12})$$ away from $F$ to ensure that the zero along $\mathcal{H}_1$ has multiplicity at least two. \\

Now assume that $F$ has exactly a double zero along $\mathcal{H}_1$ (in particular, it must have even weight) and a zero along $\mathcal{H}_2$. Suppose first that $F$ has exactly a simple zero along $\mathcal{H}_2$. Then its first pullback $P_1^{\mathcal{H}_2} F$ has odd weight and at least a double zero along the diagonal in $X_{K(2)}$ and is therefore contained in the ideal generated by $\phi_8 \phi_{11}$ and $\phi_{10}\phi_{11}$. The products $m_8 m_{10}^{(2)}$ and $m_{10}^{(1)} m_{10}^{(2)}$ have (up to a constant multiple) exactly these first pullbacks, so subtracting away some expression of the form $$m_8 m_{10}^{(2)} P_1(\mathcal{E}_4,\mathcal{E}_6,m_8,m_{10}^{(1)},m_{11},m_{12}) + m_{10}^{(1)} m_{10}^{(2)} P_2(\mathcal{E}_4,\mathcal{E}_6,m_8,m_{10}^{(1)},m_{11},m_{12})$$ with polynomials $P_1,P_2$ leaves us with a modular form with at least double zeros along both $\mathcal{H}_1$ and $\mathcal{H}_2$. The double zero along $\mathcal{H}_2$ forces the second pullback to $\mathcal{H}_1$ to have at least a \emph{fourth}-order zero along the diagonal and therefore to be a multiple of $\psi_{10}^2$. Since $m_9^2$ has exactly this second pullback to $\mathcal{H}_1$ (up to a constant multiple) and a double zero along $\mathcal{H}_2$, we may subtract away some expression of the form $$m_9^2 P(\mathcal{E}_4,\mathcal{E}_6,m_{10}^{(2)},m_{12})$$ from $F$ to obtain a form with a third-order zero along $\mathcal{H}_1$ and which continues to have a double zero on $\mathcal{H}_2$.

Finally, any modular form $F$ with a triple zero along $\mathcal{H}_1$ and a zero along $\mathcal{H}_2$ is divisible by $b_7$ (by Koecher's principle), with the quotient $\frac{F}{b_7}$ having strictly lower weight. By induction, $F/b_7$ and therefore $F$ is a polynomial expression in the generators in the claim.

The relations were computed by working directly with Fourier expansions. Here the main difficulties are determining how many Fourier coefficients must be computed to show that a modular form is identically zero, and determining how many relations are needed to generate the full ideal. To verify the correctness of these computations in both cases it is enough to know the dimensions of spaces of Hermitian modular forms, and these are derived in section 6 below.
\end{proof}

\begin{prop} There are holomorphic skew-symmetric forms $h_{30},h_{31},h_{32},h_{33},h_{34},h_{35}$, which are obtained from $b_{28}$ and the Maass lifts constructed above by inverting $b_7$, such that every Hermitian modular form for $\mathcal{O}_K$ is a polynomial in $$\mathcal{E}_4,\mathcal{E}_6,b_7,m_8,m_9,m_{10}^{(1)},m_{10}^{(2)},m_{11},m_{12},b_{28},h_{30},h_{31},h_{32},h_{33},h_{34},h_{35}.$$
\end{prop}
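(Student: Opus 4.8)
The plan is to reduce the statement to the preceding theorem by splitting the ring according to the transpose involution. Write $M = M^{+}\oplus M^{-}$, where $M^{+}$ is the subring of graded-symmetric Hermitian modular forms for $\mathcal{O}_K$ and $M^{-}$ is the $M^{+}$-module of graded-skew-symmetric ones; this splitting exists because $\tau\mapsto\tau^{T}$ lies in $\Gamma_{\mathcal{O}_K}$ but not in the image of $\mathrm{SU}_{2,2}(\mathcal{O}_K)$, and it makes $M$ a $\mathbb{Z}/2$-graded ring since a product of two skew-symmetric forms is symmetric. Hence the subring of $M$ generated by any set of ring generators of $M^{+}$ together with a set $\mathcal{B}\subseteq M^{-}$ equals $M^{+}$ plus the $M^{+}$-submodule generated by $\mathcal{B}$, and this is all of $M$ exactly when $\mathcal{B}$ generates $M^{-}$ over $M^{+}$. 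Since the preceding theorem gives ring generators of $M^{+}$, everything comes down to showing that $\mathcal{B}=\{b_{28},h_{30},\dots,h_{35}\}$ generates $M^{-}$ as an $M^{+}$-module.

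Next I would record two facts obtained by writing out the pullback embeddings $\phi$ at $\lambda$ of norm $1$ and of norm $7$: the transpose fixes $\mathcal{H}_1$ pointwise, while on $\mathcal{H}_7$ it is induced by $\mathrm{diag}(1,-1,1,-1)\in K(7)$, which acts on weight-$k$ paramodular forms through the automorphy factor $(-1)^{k}$. Comparing the two descriptions of the transpose therefore forces every even-weight skew-symmetric form to vanish along $\mathcal{H}_1$, and \emph{every} skew-symmetric form to vanish along $\mathcal{H}_7$. Since $\mathrm{div}\,b_{28}=7\mathcal{H}_1+\mathcal{H}_7$ and $b_{28}$ has order exactly $7$ along $\mathcal{H}_1$, it follows that for any skew-symmetric $F$ the form $b_7^{\,3}F/b_{28}$ is a holomorphic symmetric form (as $b_7^{\,3}F$ vanishes to order $\ge 9$ along $\mathcal{H}_1$ and to order $\ge 1$ along $\mathcal{H}_7$), hence a polynomial in the symmetric generators by the theorem; thus $b_7^{\,3}M^{-}\subseteq b_{28}M^{+}$, and in particular $M^{-}$ and $b_{28}M^{+}$ coincide after inverting $b_7$ — this is what is meant by the $h_i$ being obtained from $b_{28}$ by inverting $b_7$. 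Concretely one takes the $h_i$ to be the holomorphic forms $b_{28}\,g/b_7^{\,n}$ with $n\le 2$ and $g\in M^{+}$ chosen, using the ring structure from the theorem and the pullbacks in Tables~1 and~2, to vanish to order $\ge n$ along $\mathcal{H}_2$, so as to realize each weight $30,\dots,35$; their orders along $\mathcal{H}_1$ and $\mathcal{H}_2$ and their pullbacks to $X_{K(1)}$ and $X_{K(2)}$ are then read off from the tables together with the multiplicativity of the quasi-pullback.

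The module statement is then proved by induction on the weight, following the proof of the theorem almost verbatim. Given $F\in M^{-}$ of weight $k$, I would subtract a suitable element $E$ of the submodule generated by $\mathcal{B}$ — a combination of $b_{28}$, the $h_i$, and their products with the symmetric generators — so that $F-E$ vanishes to order $\ge 3$ along $\mathcal{H}_1$ and to order $\ge 1$ along $\mathcal{H}_2$. Here one uses the automatic vanishing just described together with the structure of the pullbacks: the positive-order pullbacks of $F$ to $X_{K(1)}$ are Siegel cusp forms of level one (so multiples of $\psi_{35}$ in odd weight and of $\psi_{10},\psi_{12}$ in even weight), the pullback of $F$ to $X_{K(2)}$ is a paramodular form \emph{anti}-invariant under the Fricke involution (so controlled by the antisymmetric part of $M_{\ast}(K(2))$ in Proposition~4 and by Remark~5), and the double intersection $\mathcal{H}_1\cap\mathcal{H}_2=2H_1$ forces extra vanishing of the $X_{K(1)}$-pullbacks along the diagonal, exactly as in the theorem. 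A form with these vanishing orders is divisible by $b_7$ (Koecher's principle), so $F-E=b_7F'$ with $F'$ skew-symmetric of weight $k-7$; by the inductive hypothesis $F'$ lies in the submodule generated by $\mathcal{B}$, hence so do $b_7F'$ and $F=E+b_7F'$. In the finitely many weights too small for $b_7$-division one divides instead by $b_{28}$, once the order along $\mathcal{H}_1$ has been pushed up to $7$; the automatic vanishing of skew forms makes this possible, and there the claim — in particular $M^{-}_k=0$ for $k<28$ — is confirmed by the dimension formula of Section~6.

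The main difficulty will be the reduction step. In the symmetric case the theorem had at its disposal a whole table of Maass lifts with explicitly computed pullbacks; here the only skew-symmetric building blocks are $b_{28}$ and the $h_i$, all of which vanish along $\mathcal{H}_1$, so the low-order behaviour of a skew form along $\mathcal{H}_1$ cannot be reduced directly and must instead be shown to be forced — which requires a careful joint analysis of the level-one structure (in odd weight only $\psi_{35}$, together with the double intersection with $\mathcal{H}_2$) and of the Fricke-anti-invariant level-two forms. As with the theorem, the assertion that $\mathcal{B}$ is complete and that no skew-symmetric generator has been omitted rests on the independent dimension count of Section~6, which I would establish first.
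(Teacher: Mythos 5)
Your framing is the right one and matches the paper's: split $M=M^{+}\oplus M^{-}$, observe that skew-symmetric forms have forced zeros (even-weight ones on $\mathcal{H}_1$, all of them on $\mathcal{H}_7$, via the automorphy factor of $\mathrm{diag}(1,-1,1,-1)$), conclude $b_7^{3}M^{-}\subseteq b_{28}M^{+}$, and try to realize enough holomorphic quotients $b_{28}g/b_7^{n}$ to run a reduction. But the reduction is the entire content of the proposition, and the concrete details you supply contain a fatal restriction. If you only allow $n\le 2$, then every quotient $b_{28}g/b_7^{n}$ has order at least $7-3n+\mathrm{ord}_{\mathcal{H}_1}(g)\ge 1$ along $\mathcal{H}_1$, so \emph{every} element of the $M^{+}$-module generated by your $\mathcal{B}$ vanishes on $\mathcal{H}_1$. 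That module is then proper: the form $h_{35}=b_{28}m_{10}^{(2)}m_9^{2}/b_7^{3}$ is holomorphic, skew-symmetric, and has order exactly $0$ on $\mathcal{H}_1$ (orders $7+0+2-9$ on $\mathcal{H}_1$ and $0+1+2-3$ on $\mathcal{H}_2$), so it cannot lie in your module. Worse, without a skew form of odd weight that is nonzero on $\mathcal{H}_1$ you cannot even begin the odd-weight reduction, since an odd-weight skew form has even (possibly zero) order along $\mathcal{H}_1$ and its $P_0^{\mathcal{H}_1}$ must be cancelled by something whose restriction to $\mathcal{H}_1$ is a nonzero multiple of $\psi_{35}$. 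The exponent $n=3$ is genuinely needed for $h_{34}$ and $h_{35}$.

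The second gap is in the reduction itself, which you defer as "the main difficulty." Your uniform plan (push every skew $F$ to order $\ge 3$ on $\mathcal{H}_1$ and $\ge 1$ on $\mathcal{H}_2$, then divide by $b_7$) forces you, in \emph{even} weight, to kill the restriction $P_0^{\mathcal{H}_2}F$, which is a general even-weight Fricke-anti-invariant form in $f_{12}\cdot M_{*}^{sym}(K(2))$ with a triple diagonal zero; you would need the restrictions to $\mathcal{H}_2$ of $b_{28},h_{30},h_{32},h_{34}$ (times symmetric forms) to surject onto that module, which you neither prove nor make plausible. The paper avoids this entirely by treating the parities asymmetrically: even-weight skew forms are reduced using only the odd-order pullbacks $P_1,P_3,P_5$ to $\mathcal{H}_1$ (killed by $h_{34},h_{32},h_{30}$, whose quasi-pullbacks are multiples of $\psi_{35}$) until the order reaches $7$, and then divided by $b_{28}$ to land in $M^{+}$; only odd-weight skew forms are reduced mod $b_7$, and there the $\mathcal{H}_2$-restriction is a multiple of the single form $\phi_8\phi_{11}f_{12}$ (forced by the fourth-order diagonal zero), matched exactly by $h_{31}$. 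Finally, your appeal to the dimension count of Section 6 as an independently established base case is circular in the logic of this paper: those Hilbert series are \emph{derived from} the reductions of Sections 4 and 5. The base case ($M_k^{-}=0$ in low weight) should instead fall out of the reduction itself, as it does in the paper, where the relevant pullbacks are odd-weight Siegel cusp forms of weight below $35$ and hence automatically zero.
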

\begin{proof} As a skew-symmetric form, $F$ has a forced zero on the Heegner divisor $\mathcal{H}_7$. If $F$ has even weight, the point will be to subtract away skew-symmetric forms from $F$ to produce something with at least a seventh-order zero on the surface $\mathcal{H}_1$, which will therefore be divisible by $b_{28}$. By contrast if $F$ has odd weight then it seems to be more effective to reduce first against the product $b_7$.

(i) Suppose $F$ has even weight, so its order along $\mathcal{H}_1$ is odd and its quasi-pullback to $\mathcal{H}_1$ takes the form $$\mathrm{Q}F = \psi_{35} P(\psi_4,\psi_6,\psi_{10},\psi_{12})$$ for some polynomial $P$. The quotients $h_{30} := b_{28} \frac{m_9}{b_7}, h_{32}  := b_{28} \frac{m_9^2}{b_7^2}, h_{34} := b_{28} \frac{m_9^3}{b_7^3}$ are holomorphic and skew-symmetric, with zeros along $\mathcal{H}_1$ of order $5,3,1$ respectively, and in all cases their quasi-pullback to $\mathcal{H}_1$ is a constant multiple of $\psi_{35}$. By subtracting from $F$ expressions of the form $$\{h_{30},h_{32},h_{34}\} \cdot P(\mathcal{E}_4,\mathcal{E}_6,m_{10}^{(2)},m_{12}),$$ we are able to force the first, third and fifth order pullbacks of $F$ to $\mathcal{H}_1$ to vanish. But then $F$ is divisible by $b_{28}$ with symmetric quotient, so we apply the previous proposition. \\

(ii) Suppose $F$ has odd weight (and therefore even order along $\mathcal{H}_1$). Then we will find expressions to subtract away from $F$ to force divisibility by $b_7$. (The reduction against $b_{28}$ as in the even-weight case seems impossible, as there are no skew-symmetric modular forms of weight $29$ and therefore no way to handle sixth-order zeros on $\mathcal{H}_1$.) We will first force $F$ to have at least a fourth-order zero along $\mathcal{H}_1$. The quotients $$h_{33} := \frac{b_{28} m_{10}^{(2)} m_9}{b_7^2}, \; h_{35} := \frac{b_{28} m_{10}^{(2)} m_9^2}{b_7^3}$$ are holomorphic and skew-symmetric, with zeros along $\mathcal{H}_1$ of orders $2$ and $0$, respectively, and their quasi-pullbacks to $\mathcal{H}_1$ are again constant multiples of $\psi_{35}$. By subtracting from $F$ expressions of the form $$\{h_{33},h_{35}\} \cdot P(\mathcal{E}_4,\mathcal{E}_6,m_{10}^{(2)},m_{12}),$$ we can ensure that the $0^{\mathrm{th}}$ and $2^{\mathrm{nd}}$ pullbacks of $F$ to $\mathcal{H}_1$ vanish, so $\mathrm{ord}_{\mathcal{H}_1}(F) \ge 4$. \\

Now the pullback of $F$ to $\mathcal{H}_2$ is skew-symmetric, has odd weight, and vanishes on the diagonal to order at least four, so it is therefore a multiple of the weight $31$ form $\phi_8 \phi_{11} f_{12}$: i.e. $$F\Big|_{\mathcal{H}_2} = \phi_8 \phi_{11} f_{12} P(E_4,E_6,\phi_8,\phi_{10},\phi_{12})$$ for some polynomial $P$. But the form $$h_{31} := \frac{b_{28} m_{10}^{(2)}}{b_7}$$ is holomorphic and skew-symmetric, with a fourth-order zero on $\mathcal{H}_1$, and it restricts to (a multiple of) $\phi_8 \phi_{11} f_{12}$ on $\mathcal{H}_2$. Therefore, some expression of the form $$F - h_{31}  P(\mathcal{E}_4,\mathcal{E}_6,m_8,m_{10}^{(1)},m_{12})$$ has a zero on $\mathcal{H}_2$ and continues to have at least a fourth-order zero on $\mathcal{H}_1$. The result will be divisible by $b_7$ with the quotient having even weight and therefore being covered by case (i).
\end{proof}

\section{Hermitian modular forms for $\mathbb{Q}(\sqrt{-11})$}

In this section we reduce the computation of the graded ring of Hermitian modular forms of degree two for the maximal order in $\mathbb{Q}(\sqrt{-11})$ to the results of Igusa and Dern on paramodular forms. The argument is very nearly the same as the previous section. We first deal with symmetric Hermitian modular forms (of all weights) by reduction against the distinguished Borcherds product $b_5$ with divisor $$\mathrm{div}\, b_5 = 5 \mathcal{H}_1 + \mathcal{H}_3.$$ The Maass lifts we take as generators are described in more detail in the tables on the next page.

\afterpage{
\clearpage

\begin{landscape}

\begin{footnotesize}

As in the previous section, the input forms into the Maass lift in Tables 4 and 5 are expressed as component sums using the convention of \cite{BB} and \cite{SW}. The Borcherds products $b_5,b_8,b_9$ satisfy the Maass condition so they are listed both as Maass lifts and Borcherds products.

\begin{table}[htbp]
\centering
\caption{Maass lifts in even weight}
\begin{tabular}{l*{8}{c}r}
\hline
Name & Weight & Input form & $P_0^{\mathcal{H}_1}$ & $P_2^{\mathcal{H}_1}$ & $P_4^{\mathcal{H}_1}$ & $P_0^{\mathcal{H}_3}$ & $P_1^{\mathcal{H}_3}$\\
\hline
\hline
$\mathcal{E}_4$  & 4 & $1 + 2q^2 + 20q^6 + 32q^7 + 34q^8 + 52q^{10} + ...$ & $E_4$ & $0$ & $0$ & $E_4$ & $0$ \\
\hline
$\mathcal{E}_6$   & 6 & $1 - \frac{22}{85}q^2 - \frac{1804}{85}q^6 - \frac{704}{17}q^7 - \frac{5654}{85}q^8 - \frac{13772}{85}q^{10} - ... $ & $E_6$ & $0$ & $-\frac{26345088}{17}\psi_{10}$ & $E_6 - \frac{266112}{5185}\varphi_6$ & $0$ \\
\hline
$m_6$ & $6$ & $q^2 - 3q^6 - 10q^7 + 2q^8 + 31q^{10} \pm ...$ & $0$ & $0$ & $37440\psi_{10}$ & $2\varphi_6$ & $0$ \\
\hline
$m_8$ & $8$ & $q^2 - 3q^6 + 14q^7 + 2q^8 - 65q^{10} \pm ...$ & $0$ & $0$ & $62016\psi_{12}$ & $2\varphi_8$ & $48\varphi_9$ \\
\hline
$b_8$ & $8$ & $q^6 - q^7 - q^8 + q^{11} + q^{13} \pm ...$ & $0$ & $120 \psi_{10}$ & $-544\psi_{12}$ & $0$ & $-6\varphi_9$ \\
\hline
$m_{10}^{(1)}$ & $10$ & $q^6 + 3q^7 - q^8 + 8q^{10} - 11q^{11} - 27q^{13} \pm ...$ & $0$ & $152 \psi_{12}$ & $2688E_4 \psi_{10}$ & $-\frac{1}{6}E_4 \varphi_6 + \frac{1}{6} \varphi_{10}$ & $2\varphi_{11}$ \\
\hline
$m_{10}^{(2)}$ & $10$ & $q^6 - q^7 + 11q^8 - 12q^{10} - 11q^{11} + q^{13} \pm ...$ & $24\psi_{10}$ & $-16\psi_{12}$ & $240 E_4 \psi_{10}$ & $0$ & $-6\varphi_{11}$ \\
\hline
$m_{12}$ & $12$ & $q^2 + 136q^6 - 77q^7 + 7q^8 + 463q^{10} \pm ...$ & $288 \psi_{12}$ & $24136 E_4 \psi_{10}$ & $1023040E_6 \psi_{10} + 34784E_4 \psi_{12}$ & $2\varphi_{12}$ & $-690E_4 \varphi_9$ \\
\hline
\end{tabular}    
\end{table}

\begin{table}[htbp]
\centering
\caption{Maass lifts in odd weight}
\begin{tabular}{l*{8}{c}r}
\hline
Name & Weight & Input form & $P_1^{\mathcal{H}_1}$ & $P_3^{\mathcal{H}_1}$ & $P_5^{\mathcal{H}_1}$ & $P_0^{\mathcal{H}_3}$ & $P_1^{\mathcal{H}_3}$\\
\hline
\hline
$b_5$ & $5$ & $\chi(8)q^2 - 5\chi(7)q^6 + 4\chi(2)q^7 + 10\chi(6)q^8 - 5\chi(1)q^{10} \pm ...$ & $0$ & $0$ & $187200\psi_{10}$ & $0$ & $6\varphi_6$ \\
\hline
$m_7$ & $7$ & $\chi(8)q^2 + 7\chi(7)q^6 - 8\chi(2)q^7 - 26\chi(6)q^8 + 19\chi(1)q^{10} \pm ...$ & $0$ & $4320\psi_{10}$ & $-32640\psi_{12}$ & $0$ & $6\varphi_8$ \\
\hline
$b_9$ & $9$ & $\chi(2)q^7 - \chi(6)q^8 - \chi(1)q^{10} + \chi(8) q^{13} + \chi(7)q^{17} \pm ...$ & $\psi_{10}$ & $-40\psi_{12}$ & $2472E_4 \psi_{10}$ & $\frac{1}{2}\varphi_9$ & $\frac{1}{24}E_4 \varphi_6 - \frac{1}{24}\varphi_{10}$ \\ 
\hline
$m_9$ & $9$ & $\chi(8)q^2 + 19\chi(7)q^6 - 20\chi(2)q^7 + 82\chi(6)q^8 - 101\chi(1)q^{10} \pm ...$ & $288\psi_{10}$ & $-576\psi_{12}$ & $570816E_4 \psi_{10}$ & $0$ & $\varphi_{10}$ \\
\hline
$m_{11}$ & $11$ & $\chi(2)q^7 + 2\chi(6)q^8 + 8 \chi(1)q^{10} - 17\chi(8)q^{13} - 29\chi(7)q^{17} \pm ...$ & $4 \psi_{12}$ & $-152E_4 \psi_{10}$ & $\frac{8980}{3}E_4 \psi_{12} + \frac{17300}{3}E_6 \psi_{10}$ & $\varphi_{11}$ & $\frac{1}{8}E_4 \varphi_8 - \frac{5}{36} E_6 \varphi_6 - \frac{4560}{61} \varphi_6^2 + \frac{1}{72} \varphi_{12}$ \\
\hline
\end{tabular}    
\end{table}

\begin{table}[htbp]
\centering
\caption{Borcherds products}
\begin{tabular}{l*{4}{c}r}
\hline
Name & Weight & Divisor & Graded-symmetric? \\
\hline
\hline
$b_5$ & $5$ & $5\mathcal{H}_1 + \mathcal{H}_3$ & yes \\
\hline
$b_8$ & $8$ & $2\mathcal{H}_1 + \mathcal{H}_3 + \mathcal{H}_4$ & yes \\
\hline
$b_9$ & $9$ & $\mathcal{H}_1 + \mathcal{H}_5$ & yes \\
\hline
$b_{24}$ & $24$ & $11\mathcal{H}_1 + \mathcal{H}_{11}$ & no \\
\hline
\end{tabular}    
\end{table}

\end{footnotesize}

\end{landscape}
\clearpage
}

\begin{lem} Let $F$ be a symmetric Hermitian modular form. There is a polynomial $P$ such that $$F - P(\mathcal{E}_4,\mathcal{E}_6,m_6,m_8,b_9,m_{10}^{(1)},m_{11},m_{12})$$ vanishes along the Heegner divisor $\mathcal{H}_3$.
\end{lem}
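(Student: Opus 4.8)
The plan is to mirror the proof of Lemma 5 from the $\mathbb{Q}(\sqrt{-7})$ section: reduce the claim to a statement about the ring of symmetric paramodular forms of level $3$. The embedding $\phi$ of subsection 2.5 sends a symmetric Hermitian modular form to a symmetric paramodular form of level $K(3)$ via the $0^{\mathrm{th}}$ pullback $P_0^{\mathcal{H}_3}$, preserving the weight. So it suffices to show that the pullbacks $P_0^{\mathcal{H}_3}\mathcal{E}_4, P_0^{\mathcal{H}_3}\mathcal{E}_6, P_0^{\mathcal{H}_3}m_6, P_0^{\mathcal{H}_3}m_8, P_0^{\mathcal{H}_3}b_9, P_0^{\mathcal{H}_3}m_{10}^{(1)}, P_0^{\mathcal{H}_3}m_{11}, P_0^{\mathcal{H}_3}m_{12}$ generate the full graded ring $M_\ast^{\mathrm{sym}}(K(3))$ of symmetric paramodular forms of level $3$; then, given any symmetric Hermitian $F$, one chooses the polynomial $P$ so that $P_0^{\mathcal{H}_3}F = P\bigl(P_0^{\mathcal{H}_3}\mathcal{E}_4,\ldots\bigr)$, and $F - P(\mathcal{E}_4,\mathcal{E}_6,m_6,m_8,b_9,m_{10}^{(1)},m_{11},m_{12})$ then has vanishing $0^{\mathrm{th}}$ pullback, i.e. vanishes along $\mathcal{H}_3$.

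The substance is therefore the surjectivity check, which is done by inspecting the entries in the $P_0^{\mathcal{H}_3}$ column of Tables 4 and 5 against Dern's structure result (Proposition 4(iii)): $M_\ast(K(3))$ is generated by $E_4,E_6,\varphi_6,\varphi_8,\varphi_9,\varphi_{10},\varphi_{11},\varphi_{12},f_{12}$, and dropping the antisymmetric generator $f_{12}$ leaves generators of the symmetric subring $M_\ast^{\mathrm{sym}}(K(3))$. From the tables one reads off (up to nonzero scalars and lower-order corrections in already-available generators): $P_0^{\mathcal{H}_3}\mathcal{E}_4 = E_4$; $P_0^{\mathcal{H}_3}m_6 = 2\varphi_6$, hence $\varphi_6$ is hit; then $P_0^{\mathcal{H}_3}\mathcal{E}_6 = E_6 - \tfrac{266112}{5185}\varphi_6$ gives $E_6$; $P_0^{\mathcal{H}_3}m_8 = 2\varphi_8$ gives $\varphi_8$; $P_0^{\mathcal{H}_3}b_9 = \tfrac12\varphi_9$ gives $\varphi_9$; $P_0^{\mathcal{H}_3}m_{10}^{(1)} = -\tfrac16 E_4\varphi_6 + \tfrac16\varphi_{10}$ gives $\varphi_{10}$ once $E_4,\varphi_6$ are known; $P_0^{\mathcal{H}_3}m_{11} = \varphi_{11}$ gives $\varphi_{11}$; and $P_0^{\mathcal{H}_3}m_{12} = 2\varphi_{12}$ gives $\varphi_{12}$. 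Thus all symmetric generators of $M_\ast(K(3))$ lie in the image, and the eight pullbacks generate $M_\ast^{\mathrm{sym}}(K(3))$.

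A small point worth addressing explicitly: one must confirm that $P_0^{\mathcal{H}_3}b_9 = \tfrac12\varphi_9$ is the \emph{full} $0^{\mathrm{th}}$ pullback (not a higher pullback masquerading as such) — equivalently that $b_9$ does not vanish on $\mathcal{H}_3$ — which is consistent with $\mathrm{div}\, b_9 = \mathcal{H}_1 + \mathcal{H}_5$ from Table 6, so $b_9$ is nonvanishing on $\mathcal{H}_3$. Similarly, the table entries encode that the $0^{\mathrm{th}}$ pullbacks of $b_5, b_8, m_9$ vanish (they are the generators whose divisors contain $\mathcal{H}_3$, or which were chosen to vanish there), which is why those generators do not appear in the list in the lemma. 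The proof is then a one-line appeal to Dern's theorem and Tables 4–5, exactly as in Lemma 5.

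The main obstacle is essentially bookkeeping rather than conceptual: one must trust that the numerically computed pullback entries in Tables 4 and 5 are correct (in particular the triangular structure — each new generator's pullback involves its ``own'' $\varphi_k$ plus only lower-weight generators already obtained — which makes the inductive solving for $P$ possible), and that Dern's generating set genuinely restricts to a generating set of the symmetric subring after deleting $f_{12}$. Both are verified in the cited references and the auxiliary file accompanying the paper; the proof itself should simply cite Proposition 4(iii) and the relevant columns of Tables 4 and 5, as is done verbatim for $\mathbb{Q}(\sqrt{-7})$ in Lemma 5.
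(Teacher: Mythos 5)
Your proposal is correct and follows exactly the paper's argument: reduce the lemma to the surjectivity of $P_0^{\mathcal{H}_3}$ onto $M_\ast^{\mathrm{sym}}(K(3))$ and verify from the $P_0^{\mathcal{H}_3}$ column of Tables 4 and 5 that the listed generators pull back (in triangular fashion) onto Dern's symmetric generators $E_4,E_6,\varphi_6,\varphi_8,\varphi_9,\varphi_{10},\varphi_{11},\varphi_{12}$. Your extra remarks (that $b_9$ does not vanish on $\mathcal{H}_3$, and that the omitted generators are precisely those vanishing there) are consistent with, and slightly more explicit than, the paper's one-line proof.
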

\begin{proof} We only need to check that the pullbacks of $\mathcal{E}_4,\mathcal{E}_6,m_6,m_8,b_9,m_{10}^{(1)},m_{11},m_{12}$ to $\mathcal{H}_3$ generate the graded ring of paramodular forms of level $3$. This is clear from Tables 4 and 5 below after comparing the pullbacks with the generators found by Dern as described in Section 3.
\end{proof}

\newpage

\begin{thrm} The graded ring of symmetric Hermitian modular forms for $\mathcal{O}_K$ is generated by Maass lifts $$\mathcal{E}_4,b_5,\mathcal{E}_6,m_6,m_7,b_8,m_8,b_9,m_9,m_{10}^{(1)},m_{10}^{(2)},m_{11},m_{12}$$ in weights $4,5,6,6,7,8,8,9,9,10,10,11,12$.
\end{thrm}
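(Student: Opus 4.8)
The plan is to follow the proof of Theorem 9 almost verbatim, with $b_7$ replaced by the Borcherds product $b_5$ (divisor $5\mathcal{H}_1+\mathcal{H}_3$), paramodular level two replaced by level three, and a few additional reduction steps inserted to compensate for the fact that $b_5$ vanishes to order $5$, rather than $3$, along $\mathcal{H}_1$. One argues by induction on the weight of a graded-symmetric Hermitian modular form $F$; forms of weight $\le 0$ are constant, which is the base case.

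First, by Lemma 14 we may subtract a polynomial in $\mathcal{E}_4,\mathcal{E}_6,m_6,m_8,b_9,m_{10}^{(1)},m_{11},m_{12}$ and assume $F$ vanishes along $\mathcal{H}_3$. By the intersection computations of Section 2.3, $\mathcal{H}_1\cap\mathcal{H}_3$ is $2H_1$ inside $X_{K(1)}$ and is $H_1$ with multiplicity one inside $X_{K(3)}$. Hence a zero of order $h$ along $\mathcal{H}_3$ forces every pullback $P_N^{\mathcal{H}_1}F$ (which always has even weight, so lies in $\mathbb{C}[E_4,E_6,\psi_{10},\psi_{12}]$) to vanish to order $\ge 2h$ along the diagonal, i.e. to be divisible by $\psi_{10}^h$; and a zero of order $r$ along $\mathcal{H}_1$ forces every $P_N^{\mathcal{H}_3}F$ to vanish to order $\ge r$ along the diagonal of $X_{K(3)}$, which — together with the fact that graded-symmetric paramodular forms have even, resp. odd, order along that diagonal in even, resp. odd, weight — constrains $P_N^{\mathcal{H}_3}F$ via Dern's generators and the relations $\varphi_8^2=\varphi_6\varphi_{10}$, $\varphi_{10}^2=\varphi_8\varphi_{12}$, $\varphi_6\varphi_{11}=\varphi_8\varphi_9$, $\varphi_8\varphi_{11}=\varphi_9\varphi_{10}$ of Remark 12(iii). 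The objective is to subtract products of the thirteen listed generators, each vanishing on $\mathcal{H}_3$, so as to raise $\mathrm{ord}_{\mathcal{H}_1}(F)$ to at least $5$; at that point $F$ is divisible by $b_5$ (Koecher's principle), $F/b_5$ has strictly smaller weight, and the induction closes.

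For $F$ of odd weight, $F$ vanishes along $\mathcal{H}_1$ automatically, and $P_1^{\mathcal{H}_1}F$, $P_3^{\mathcal{H}_1}F$ are multiples of $\psi_{10}$; since $P_1^{\mathcal{H}_1}m_9$ and $P_3^{\mathcal{H}_1}m_7$ are nonzero multiples of $\psi_{10}$ and the zeroth pullbacks of $\mathcal{E}_4,\mathcal{E}_6,m_{10}^{(2)},m_{12}$ to $\mathcal{H}_1$ span $\mathbb{C}[E_4,E_6,\psi_{10},\psi_{12}]$, one subtracts $m_9\cdot P(\mathcal{E}_4,\mathcal{E}_6,m_{10}^{(2)},m_{12})$ to kill $P_1^{\mathcal{H}_1}F$ (raising the order to $\ge 3$) and then $m_7\cdot P(\mathcal{E}_4,\mathcal{E}_6,m_{10}^{(2)},m_{12})$ to kill $P_3^{\mathcal{H}_1}F$ (raising it to $\ge 5$); $\mathcal{H}_3$ needs no attention since $m_9,m_7$ vanish there. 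For $F$ of even weight one first kills $P_0^{\mathcal{H}_1}F$ with $m_{10}^{(2)}\cdot P(\mathcal{E}_4,\mathcal{E}_6,m_{10}^{(2)},m_{12})$, so $\mathrm{ord}_{\mathcal{H}_1}(F)\ge 2$; if at this stage $F$ has only a simple zero on $\mathcal{H}_3$ then $P_1^{\mathcal{H}_3}F$ is a multiple of $\varphi_9$, so subtracting $b_8\cdot P$ (using $P_1^{\mathcal{H}_3}b_8=-6\varphi_9$ and a polynomial $P$ in the generators whose pullback to $\mathcal{H}_3$ exhausts the even-weight graded-symmetric forms of level three) yields a form with a double zero on $\mathcal{H}_3$ and still a double zero on $\mathcal{H}_1$. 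With a double zero on $\mathcal{H}_3$ secured, $P_2^{\mathcal{H}_1}F$ and $P_4^{\mathcal{H}_1}F$ are multiples of $\psi_{10}^2$, matched respectively by $b_8m_{10}^{(2)}\cdot P$ and $b_8^2\cdot P$; subtracting these raises $\mathrm{ord}_{\mathcal{H}_1}(F)$ to $\ge 6$, so $F$ is divisible by $b_5$. In low weight the spaces of $\psi_{10}$- and $\varphi_9$-multiples involved are zero, so the relevant pullbacks vanish automatically and the reduction terminates earlier; in particular the only graded-symmetric forms of weight $<6$ are the scalar multiples of $1$, $\mathcal{E}_4$ and $b_5$.

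The routine but unavoidable part, and the main obstacle, is the bookkeeping: for each of the finitely many configurations of (parity of weight, order along $\mathcal{H}_1$, order along $\mathcal{H}_3$) one must exhibit a product of the thirteen generators whose pullbacks to $\mathcal{H}_1$ and $\mathcal{H}_3$ exactly exhaust the constrained subspace known to contain the corresponding pullback of $F$. This requires computing enough Fourier coefficients of the candidate products to identify their pullbacks against Igusa's and Dern's generators (Tables 4 and 5), verifying via Sturm-type bounds that the differences genuinely vanish to the claimed orders along $\mathcal{H}_1$ and $\mathcal{H}_3$, and confirming that the thirteen generators suffice — the last point being cross-checkable against the dimension formula of Section 6, though it is not logically needed for the generation statement itself.
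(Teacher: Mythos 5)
Your proposal is correct and follows essentially the same reduction as the paper: assume $F$ vanishes on $\mathcal{H}_3$ via the lemma, use the intersection numbers to constrain the pullbacks, kill the pullbacks to $\mathcal{H}_1$ of orders $0$ through $4$ against $m_{10}^{(2)},m_9,b_8,m_7$ and $b_8^2$ (upgrading the zero on $\mathcal{H}_3$ to a double zero with a $b_8$-multiple along the way), and then divide by $b_5$ and induct on the weight. The only deviations are cosmetic reorderings — you secure the double zero on $\mathcal{H}_3$ before killing $P_2^{\mathcal{H}_1}$ and therefore reduce against $b_8 m_{10}^{(2)}$ rather than $b_8$ alone, and you use $b_8\tilde A$ with $\tilde A$ arbitrary where the paper, working at a stage where $F$ already has a fourth-order zero on $\mathcal{H}_1$, must take $\tilde A$ from $m_6,m_8,m_{10}^{(1)}$ times polynomials — and both variants are valid.
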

The ideal of relations is considerably more complicated than the analogous ideal for $K = \mathbb{Q}(\sqrt{-7})$ so it is left to an auxiliary file for convenience.
\begin{proof} We use induction on the weight. Any modular form of nonpositive weight is constant. \\

Let $F$ be any symmetric Hermitian modular form. Using the previous lemma we assume that $F$ has a zero along $\mathcal{H}_3$. Then the pullbacks of $F$ to $\mathcal{H}_1$ of all orders have at least a double zero along the diagonal and are therefore multiples of $\psi_{10}$.

The pullbacks of $\mathcal{E}_4,\mathcal{E}_6,m_{10}^{(2)},m_{12}$ to $\mathcal{H}_1$ generate the ring of even-weight Siegel modular forms of degree two. Moreover, the forms $m_{10}^{(2)},m_9,b_8,m_7$ vanish along $\mathcal{H}_3$ and their quasi-pullbacks to $\mathcal{H}_1$ are scalar multiples of $\psi_{10}$. By successively subtracting away from $F$ expressions of the form $$\{m_{10}^{(2)},m_9,b_8,m_7\} \cdot P(\mathcal{E}_4,\mathcal{E}_6,m_{10}^{(2)},m_{12})$$ with appropriately chosen polynomials $P$, we may set the zeroth, first, second and third order pullbacks to $\mathcal{H}_1$ equal to zero while maintaining a zero on the divisor $\mathcal{H}_3$.

Therefore, we may assume that $F$ has at least a fourth-order zero on $\mathcal{H}_1$ and a zero on $\mathcal{H}_3$. Suppose $F$ has exactly a fourth-order zero on $\mathcal{H}_1$. (In particular, $F$ has even weight.) Then the quasi-pullback $\mathrm{Q}F$ of $F$ to $\mathcal{H}_3$ is an odd-weight paramodular form of level $3$ with at least a fourth-order zero on the diagonal, so $\mathrm{Q}F$ is a multiple of $\varphi_9$ and $\mathrm{Q}F / \varphi_9$ is contained in the ideal $\langle \varphi_6,\varphi_8,\varphi_{10} \rangle$ of symmetric paramodular forms of even weight with a zero on the diagonal. Then we can write $$\mathrm{Q}F = \varphi_6 \varphi_9 P_1 + \varphi_8 \varphi_9 P_2 + \Big( -\frac{1}{6} E_4 \varphi_6 + \frac{1}{6} \varphi_{10}\Big) \varphi_9 P_3$$ for some even-weight symmetric paramodular forms $P_1,P_2,P_3$. Since $m_6 b_8$, $m_8 b_8$ and $m_{10}^{(1)} b_8$ have fourth-order zeros on $\mathcal{H}_1$ and are zero on $\mathcal{H}_3$ with respective quasi-pullbacks $\varphi_6 \varphi_9$, $\varphi_8 \varphi_9$ and $(-1/6 E_4 \varphi_6 + \varphi_{10}/6) \varphi_9$, we can take any symmetric forms $\tilde P_1, \tilde P_2, \tilde P_3$ whose pullbacks to $\mathcal{H}_3$ are $P_1,P_2,P_3$ (some polynomials in $\mathcal{E}_4,\mathcal{E}_6,m_6,m_8,b_9,m_{10}^{(1)},m_{11},m_{12}$ will do) and subtract away $$b_8 \cdot \Big( m_6 \tilde P_1 + m_8 \tilde P_2 + m_{10}^{(1)} \tilde P_3 \Big)$$ from $F$ to obtain an even-weight form with (at least) a fourth-order zero on $\mathcal{H}_1$ and (at least) a double zero on $\mathcal{H}_3$.

Suppose still that $F$ has order exactly four on $\mathcal{H}_1$. Then the quasi-pullback of $F$ to $\mathcal{H}_1$ is a Siegel modular form of even weight with at least an fourth-order zero on the diagonal (due to the double zero of $F$ on $\mathcal{H}_3$) and is therefore a multiple of $\psi_{10}^2$. Since $b_8^2$ has a fourth-order zero on $\mathcal{H}_1$ with quasi-pullback (up to scalar multiple) $\psi_{10}^2$, and it also has a double zero along $\mathcal{H}_3$, we may subtract away some expression of the form $b_8^2 P(\mathcal{E}_4,\mathcal{E}_6,m_{10}^{(2)},m_{12})$ from $F$ to obtain a modular form which vanishes to at least order $5$ along $\mathcal{H}_1$ and which has at least a double zero on $\mathcal{H}_3$.

Now if $F$ has order at least $5$ along $\mathcal{H}_1$ and a zero on $\mathcal{H}_3$, then the quotient $F / b_5$ is holomorphic (by Koecher's principle) and has lower weight, so $F/b_5$ and therefore $F$ is a polynomial expression in the generators in the claim.
\end{proof}

\begin{prop} The graded ring of Hermitian modular forms of degree 2 for $\mathbb{Q}(\sqrt{-11})$ is generated by the symmetric generators of Theorem 10 and the holomorphic quotients $$h_{24+2N} = \frac{b_{24} m_7^N}{b_5^N}, \; 0 \le N \le 5$$ and $$h_{24 + 2N+ 3} = \frac{b_{24} b_8 m_7^N}{b_5^{N+1}}, \; 0 \le N \le 4.$$
\end{prop}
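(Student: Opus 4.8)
The plan is to run, with the numerology changed, the reduction scheme of the proof of Proposition~8, with $b_5$ playing the role of $b_7$ and $b_{24}$ the role of $b_{28}$. Write an arbitrary Hermitian modular form as a sum of a symmetric and a skew-symmetric form; by Theorem~10 only the skew-symmetric summand $F$ needs attention, and I would induct on the weight. Being skew-symmetric, $F$ vanishes identically on $\mathcal{H}_{11}$, and $\mathrm{div}\, b_{24} = 11\mathcal{H}_1 + \mathcal{H}_{11}$, so the induction step reduces to subtracting a polynomial in the listed generators from $F$ to produce an eleventh-order zero along $\mathcal{H}_1$, after which $F = b_{24}\cdot G = h_{24}\cdot G$ with $G$ symmetric of strictly smaller weight, which Theorem~10 finishes. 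In odd weight this is impossible directly --- there is no room for a skew-symmetric form of weight $29$ to absorb a sixth-order zero on $\mathcal{H}_1$ --- so there I would reduce first against $b_5$ and fall back on the even-weight case. Before the induction I would record the elementary properties of the listed $h$: each is holomorphic and skew-symmetric (a divisor count comparing the orders of $b_{24}m_7^N$, respectively $b_{24}b_8m_7^N$, along $\mathcal{H}_1$ and $\mathcal{H}_3$ against the divisor of $b_5^N$, respectively $b_5^{N+1}$, using the divisors in Table~6 and the vanishing orders from Tables~4 and~5); $h_{24+2N}$ and $h_{24+2N+3}$ vanish along $\mathcal{H}_1$ to order exactly $11-2N$ and $8-2N$; and the quasi-pullback of every listed $h$ to the level-one Siegel threefold $\mathcal{H}_1$ is a nonzero constant multiple of $\psi_{35}$ (from multiplicativity of the quasi-pullback and the non-vanishing, in the tables, of the leading $\mathcal{H}_1$-Taylor coefficients of $b_{24}, b_8, m_7, b_5$).

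In even weight the order of $F$ along $\mathcal{H}_1$ is odd, so its quasi-pullback to $\mathcal{H}_1$ is an odd-weight level-one Siegel form, hence a multiple of $\psi_{35}$. Since $\mathcal{E}_4, \mathcal{E}_6, m_{10}^{(2)}, m_{12}$ restrict to $\mathcal{H}_1$ as generators of $\mathbb{C}[E_4, E_6, \psi_{10}, \psi_{12}]$, I would successively subtract expressions $h\cdot P(\mathcal{E}_4, \mathcal{E}_6, m_{10}^{(2)}, m_{12})$, with $h$ the listed quotient whose order along $\mathcal{H}_1$ equals the current order of $F$, to annihilate the pullbacks of orders $1, 3, 5, 7, 9$ in turn; the remaining form vanishes to order at least $11$ on $\mathcal{H}_1$ and is divisible by $b_{24}$.

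In odd weight the order of $F$ along $\mathcal{H}_1$ is even. I would first subtract multiples of $h_{35}, h_{33}, h_{31}$ (of orders $0, 2, 4$ on $\mathcal{H}_1$, again with quasi-pullbacks constant multiples of $\psi_{35}$) times polynomials in $\mathcal{E}_4, \mathcal{E}_6, m_{10}^{(2)}, m_{12}$ to force $F$ to vanish to order at least $6$ on $\mathcal{H}_1$. Since $\mathcal{H}_1$ meets $\mathcal{H}_3$ transversally in the diagonal $H_1 \subseteq X_{K(3)}$ (the intersection number computed in Section~2), the restriction $F|_{\mathcal{H}_3}$ is then an odd-weight antisymmetric level-$3$ paramodular form with a sixth-order zero along the diagonal; by Dern's structure result together with Remark~5(iii), the space of such forms is the $M_{\ast}^{\mathrm{sym}}(K(3))$-module generated by $f_{12}\varphi_9\varphi_6$ and $f_{12}\varphi_9\varphi_8$ (every odd-weight symmetric form with a triple diagonal zero is a multiple of $\varphi_9$, and $\langle \varphi_6, \varphi_8 \rangle$ is the ideal of even-weight symmetric forms with a diagonal zero of order at least $3$). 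The $\mathcal{H}_3$-restrictions of $h_{27}$ and $h_{29}$ are nonzero multiples of exactly these two forms, so using Lemma~9 to realize arbitrary symmetric level-$3$ paramodular forms as $\mathcal{H}_3$-pullbacks of symmetric Hermitian forms, I would subtract $h_{27}\tilde P_1 + h_{29}\tilde P_2$ to kill $F|_{\mathcal{H}_3}$ while preserving the sixth-order zero on $\mathcal{H}_1$; then $F$ is divisible by $b_5$ and $F/b_5$ is skew-symmetric of even weight, so we are back in the previous case. The base of the induction is automatic, since the smallest odd-weight level-one Siegel form has weight $35$, which forces every skew-symmetric Hermitian form of weight below $24$ --- below $27$ in odd weight --- to vanish.

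The step I expect to be the main obstacle is the $\mathcal{H}_3$ reduction in odd weight. The level-$3$ diagonal ideals of Remark~5(iii) are richer than the level-$2$ ideals used in Proposition~8, so one has to check carefully that $h_{27}$ and $h_{29}$, together with the symmetric generators, really do exhaust the odd-weight antisymmetric level-$3$ forms with a sixth-order diagonal zero, and that all the tabulated pullbacks and the various $\mathcal{H}_1$- and $\mathcal{H}_3$-restrictions of the $h$ are the claimed nonzero forms. Each of these is a finite check --- a comparison of finitely many Fourier coefficients --- and the Sturm-type bounds certifying that a given difference of modular forms vanishes identically are furnished by the dimension formulas established in Section~6.
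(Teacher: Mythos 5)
Your proposal is correct and follows essentially the same route as the paper's proof: even-weight skew forms are reduced against $b_{24}$ by killing the odd-order pullbacks to $\mathcal{H}_1$ with the quotients $h_{24+2N}$, and odd-weight skew forms are first forced to order six on $\mathcal{H}_1$ via $h_{31},h_{33},h_{35}$, then reduced on $\mathcal{H}_3$ via $h_{27},h_{29}$ and divided by $b_5$. The extra details you supply (holomorphy of the $h$'s, nonvanishing of their quasi-pullbacks, and the module structure of odd-weight antisymmetric level-$3$ forms with a sixth-order diagonal zero) are all consistent with what the paper uses implicitly.
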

\begin{proof} In the even-weight case our goal is to reduce against the skew-symmetric Borcherds product $b_{24}$ with divisor $$\mathrm{div} \, b_{24} = 11 \mathcal{H}_1 + \mathcal{H}_{11}.$$ To show that the pullbacks to $\mathcal{H}_1$ of odd orders $1 \le N \le 9$ are surjective it is enough to find skew-symmetric modular forms of weights $35-N$ with exactly an $N^{\mathrm{th}}$ order zero on $\mathcal{H}_1$ (whose $N^{\mathrm{th}}$ pullback must then be a multiple of $\psi_{35}$), since we have already produced preimages of the even-weight Siegel modular forms. It is easy to see that the quotients $h_{24+2N} = b_{24} (m_7 / b_5)^N$ are holomorphic and have order $11-2N$ on $\mathcal{H}_1$.

We will reduce odd-weight skew-symmetric forms $F$ to even-weight skew-symmetric forms by reducing against $b_5$. (The reduction against $b_{24}$ as in the previous paragraph fails as there are no skew-symmetric modular forms of weight 25.) First we force at least a fifth-order zero on $\mathcal{H}_1$ using the holomorphic forms $$h_{24 + 2N + 3} = \frac{b_{24} b_8 m_7^N}{b_5^{N+1}}, \; 2 \le N \le 4,$$ which have a zero of order $8-2N$ on $\mathcal{H}_1$ and whose quasi-pullbacks must be scalar multiples of $\psi_{35}$. Therefore by subtracting away expressions of the form $$\{h_{31},h_{33},h_{35}\} \cdot P(\mathcal{E}_4,\mathcal{E}_6,m_{10}^{(2)},m_{12})$$ we may assume that $F$ has at least a sixth-order zero on $\mathcal{H}_5$.

Now the pullback of $F$ to $\mathcal{H}_3$ is an skew-symmetric modular form of odd weight with at least a sixth-order zero on the diagonal and is therefore contined in the ideal generated by $\varphi_6 \varphi_9 f_{12}$ and $\varphi_8 \varphi_9 f_{12}$. Up to scalar multiple these are exactly the pullbacks of $h_{27} = \frac{b_{24}b_8}{b_5}$ and $h_{29} = \frac{b_{24} b_8 m_7}{b_5^2}$ to $\mathcal{H}_3$. Since $h_{27}$ and $h_{29}$ both vanish to order at least $5$ on $\mathcal{H}_1$, we subtract away some expression $$h_{27} P_1(\mathcal{E}_4,\mathcal{E}_6,m_{10}^{(2)},m_{12}) + h_{29} P_2(\mathcal{E}_4,\mathcal{E}_6,m_{10}^{(2)},m_{12})$$ from $F$ to obtain a form (again called $F$) whose divisor contains $5 \mathcal{H}_1 + \mathcal{H}_3$ and which is therefore divisible by $b_5$. The quotient $F / b_5$ is skew-symmetric of even weight so the previous case applies.
\end{proof}

\section{Dimension formulas}

The task of computing ideals of relations is much easier if dimension formulas for the spaces of modular forms are available (for one thing, such formulas make it clear when enough relations have been found to generate the ideal). In principle the dimensions can always be calculated via a trace formula or Riemann-Roch theorem; however this is a rather lengthy computation which does not seem to appear explicitly in the literature. In this section we observe that those dimensions can be read off almost immediately from the method of proof in sections 4 and 5 above.

Recall that the Hilbert series of a finitely generated graded $\mathbb{C}$-algebra $M = \bigoplus_{k=0}^{\infty} M_k$ is $$\mathrm{Hilb}\, M = \sum_{k=0}^{\infty} (  \mathrm{dim}\, M_k) t^k \in \mathbb{Z}[|t|].$$

\subsection{Dimension formulas for $K = \mathbb{Q}(\sqrt{-7})$.} We will express the Hilbert series of dimensions of Hermitian modular forms for $\Gamma_K = \mathrm{SU}_{2,2}(\mathcal{O}_K)$ in terms of the Hilbert series for $\mathrm{Sp}_4(\mathbb{Z})$ and the symmetric paramodular group $K(2)^+ = \langle K(2), V_2 \rangle$ of level 2. Recall that the latter series are $$\sum_{k=0}^{\infty} \mathrm{dim}\, M_k(\mathrm{Sp}_4(\mathbb{Z})) t^k = \frac{1 + t^{35}}{(1 - t^4)(1-t^6)(1-t^{10})(1-t^{12})}$$ and $$\sum_{k=0}^{\infty} \mathrm{dim}\, M_k^{sym}(K(2)) t^k = \frac{(1 + t^{10})(1 + t^{11})}{(1 - t^4)(1-t^6)(1-t^8)(1-t^{12})}$$ corresponding to the ring decompositions $$M_{\ast}(\mathrm{Sp}_4(\mathbb{Z})) = \mathbb{C}[E_4,E_6,\psi_{10},\psi_{12}] \oplus \psi_{35} \mathbb{C}[E_4,E_6,\psi_{10},\psi_{12}]$$ and $$M_{2\ast}^{sym}(K(2)) = \mathbb{C}[E_4,E_6,\phi_8,\phi_{12}] \oplus \phi_{10} \mathbb{C}[E_4,E_6,\phi_8,\phi_{12}], \; \; M_{\ast}^{sym}(K(2)) = M_{2\ast}^{sym}(K(2)) \oplus \phi_{11} M_{2\ast-11}^{sym}(K(2)).$$ 

We first consider (graded-) symmetric even weight Hermitian modular forms. Write $$H_{even}(t) = \sum_{k \, \text{even}} \mathrm{dim}\, M_k^{sym}(\Gamma_K) t^k, \; \; H_{odd}(t) = \sum_{k \, \text{odd}} \mathrm{dim}\, M_k^{sym}(\Gamma_K) t^k.$$

Although we reduce against the product $b_7$ whose zero on the Heegner divisor $\mathcal{H}_2$ is simple, the proof of Theorem 7 suggests that we consider both the zeroth and first order pullbacks there; so altogether we take the tuple of pullbacks $$P = (P_0^{\mathcal{H}_1},P_2^{\mathcal{H}_1},P_0^{\mathcal{H}_2},P_1^{\mathcal{H}_2}) : M_{2 \ast}^{sym}(\Gamma_K) \longrightarrow M_{2 \ast}(\mathrm{Sp}_4(\mathbb{Z})) \oplus S_{2\ast+2}(\mathrm{Sp}_4(\mathbb{Z})) \oplus M_{2\ast}^{sym}(K(2)) \oplus S_{2\ast+1}^{sym}(K(2)).$$ Then we obtain the exact sequences $$0 \longrightarrow \mathrm{ker}\Big(P_0^{\mathcal{H}_2} : M_{2 \ast -7}^{sym}(\Gamma_K) \rightarrow M_{2 \ast-7}^{sym}(K(2))\Big) \stackrel{\times b_7}{\longrightarrow} M_{2 \ast}^{sym}(\Gamma_K) \stackrel{P}{\longrightarrow} \mathrm{im}\, P \longrightarrow 0$$ and $$0 \longrightarrow \psi_{10}^2 \cdot \Big( M_{2 \ast-20}(\mathrm{Sp}_4(\mathbb{Z})) \oplus M_{2 \ast - 18}(\mathrm{Sp}_4(\mathbb{Z})) \Big) \longrightarrow \mathrm{im}\, P \longrightarrow  M_{2 \ast}^{sym}(K(2)) \oplus M_{2 \ast+1}^{sym}(K(2)) \longrightarrow 0,$$ from which we obtain the Hilbert series $$\mathrm{Hilb}\, \mathrm{im}\, P = \frac{t^{18} + t^{20}}{(1 - t^4)(1-t^6)(1-t^{10})(1-t^{12})} + \frac{(1 + t^{10})^2}{(1-t^4)(1-t^6)(1-t^8)(1-t^{12})}$$ and \begin{align*} H_{even}(t) &= \mathrm{Hilb}\, \mathrm{im}\, P + t^7 \Big(H_{odd}(t) - \frac{(1+t^{10})t^{11}}{(1-t^4)(1-t^6)(1-t^8)(1-t^{12})} \Big) \\ &= t^7 H_{odd}(t) + \frac{1 + t^{10} - t^{26} - t^{28} - t^{30} + t^{38}}{(1-t^4)(1-t^6)(1-t^8)(1-t^{10})(1-t^{12})}. \end{align*} By reducing odd-weight symmetric forms against $b_7$ we obtain the exact sequences $$0 \longrightarrow M_{2\ast - 6}^{sym}(\Gamma_K) \stackrel{\times b_7}{\longrightarrow} M_{2\ast+1}^{sym}(\Gamma_K) \stackrel{P = (P_1^{\mathcal{H}_1},P_0^{\mathcal{H}_2})}{\longrightarrow} \mathrm{im}\, P \longrightarrow 0$$ and $$0 \longrightarrow \psi_{10} \cdot M_{2 \ast - 9}(\mathrm{Sp}_4(\mathbb{Z})) \longrightarrow \mathrm{im}\, P \longrightarrow M_{2\ast+1}^{sym}(K(2)) \longrightarrow 0$$ and therefore $$H_{\mathrm{odd}}(t) = t^7 H_{\mathrm{even}}(t) + \frac{t^9}{(1-t^4)(1-t^6)(1-t^{10})(1-t^{12})} + \frac{(1+t^{10})t^{11}}{(1 - t^4)(1-t^6)(1-t^8)(1-t^{12})}.$$ These equations resolve to \begin{align*} &\quad \mathrm{Hilb}\, M_{\ast}^{sym}(\Gamma_K) = H_{even}(t) + H_{odd}(t)  \\ &= \frac{1 + t^4 + t^8 + t^9 + t^{10} + t^{11} + t^{12} + t^{13} + t^{14} + t^{15} + t^{16} + t^{18} + t^{19} + t^{20} + t^{22} + t^{23} + t^{24} + t^{27} - t^{30} - t^{34}}{(1 - t^6)(1-t^7)(1-t^8)(1-t^{10})(1-t^{12})}. \end{align*}

Now we compute dimensions of spaces of (graded) skew-symmetric modular forms. For even-weight forms the first, third and fifth order pullbacks to $\mathcal{H}_1$ yield an exact sequence $$0 \longrightarrow M_{2\ast-28}^{sym}(\Gamma_K) \stackrel{\times b_{28}}{\longrightarrow} M_{2\ast}^{skew}(\Gamma_K) \stackrel{(P_1,P_3,P_5)}{\longrightarrow}  S_{2\ast+1}(\mathrm{Sp}_4(\mathbb{Z})) \oplus S_{2\ast+3}(\mathrm{Sp}_4(\mathbb{Z})) \oplus S_{2\ast+5}(\mathrm{Sp}_4(\mathbb{Z})) \longrightarrow 0$$ and we obtain the generating series $$\sum_{k=0}^{\infty} \mathrm{dim}\, M_{2k}^{skew}(\Gamma_K) t^{2k} = \frac{t^{30} + t^{32} + t^{34}}{(1 - t^4)(1-t^6)(1-t^{10})(1 - t^{12})} + t^{28} \sum_{k=0}^{\infty} \mathrm{dim}\, M_{2k}^{sym}(\Gamma_K) t^{2k}.$$

As for odd-weight skew-symmetric forms, we use the exact sequences $$0 \longrightarrow M_{2\ast-6}^{skew}(\Gamma_K) \stackrel{\times b_7}{\longrightarrow} M_{2\ast+1}^{skew}(\Gamma_K) \stackrel{P = (P_0^{\mathcal{H}_1},P_2^{\mathcal{H}_1},P_0^{\mathcal{H}_2})}{\longrightarrow} \mathrm{im}\, P \longrightarrow 0$$ and $$0 \longrightarrow \phi_8 \phi_{11} f_{12} M_{2\ast-30}^{sym}(K(2)) \longrightarrow  \mathrm{im}\, P \longrightarrow M_{2\ast+1}(\mathrm{Sp}_4(\mathbb{Z})) \oplus M_{2\ast+3}(\mathrm{Sp}_4(\mathbb{Z})) \longrightarrow 0$$ to obtain \begin{align*} \sum_{k=0}^{\infty} \mathrm{dim}\, M_{2k+1}^{skew}(\Gamma_K) t^{2k+1} &= \frac{t^{33} + t^{35}}{(1 - t^4)(1-t^6)(1-t^{10})(1 - t^{12})} + \frac{t^{31}(1 + t^{10})}{(1 - t^4)(1-t^6)(1-t^8)(1-t^{12})} \\ &\quad \quad + t^7 \sum_{k=0}^{\infty} \mathrm{dim}\, M_{2k}^{skew}(\Gamma_K) t^{2k}, \end{align*} reducing the computation to the previous paragraph. Altogether we find $$\sum_{k=0}^{\infty} \mathrm{dim}\, M_k(\Gamma_K) t^k = \frac{P(t)}{(1-t^4)(1 - t^6)(1-t^7)(1-t^{10})(1-t^{12})}$$ where \begin{align*}P(t) &= 1 + t^{8} + t^{9} + t^{10} + t^{11} + t^{16} + t^{18} + t^{19} + t^{24}+ t^{27} + 2t^{32} + t^{33} + t^{34} + 2t^{35} - t^{42} + t^{43}. \end{align*}

The table below lists dimensions for the full space of Hermitian modular forms; the subspace of graded-symmetric Hermitian modular forms; and the subspace of Maass lifts.

\begin{table}[htbp]
\centering
\caption{Dimensions for $\mathbb{Q}(\sqrt{-7})$}
\begin{tabular}{l*{21}{c}r}
\hline
$k$ & 1 & 2 & 3 & 4 & 5 & 6 & 7 & 8 & 9 & 10 & 11 & 12 & 13 & 14 & 15 & 16 & 17 & 18 & 19 & 20 \\
\hline
$\mathrm{dim}\, M_k(\Gamma_K)$ & 0 & 0 & 0 & 1 & 0 & 1 & 1 & 2 & 1 & 3 & 2 & 4 & 2 & 5 & 4 & 8 & 5 & 10 & 8 & 13 \\
\hline
$\mathrm{dim}\, M_k^{sym}(\Gamma_K)$ & 0 & 0 & 0 & 1 & 0 & 1 & 1 & 2 & 1 & 3 & 2 & 4 & 2 & 5 & 4 & 8 & 5 & 10 & 8 & 13 \\
\hline
$\mathrm{dim}\, \mathrm{Maass}_k(\Gamma_K)$ & 0 & 0 & 0 & 1 & 0 & 1 & 1 & 2 & 1 & 3 & 2 & 3 & 2 & 4 & 3 & 5 & 3 & 5 & 4 & 6  \\
\hline
\hline
$k$ & 21 & 22 & 23 & 24 & 25 & 26 & 27 & 28 & 29 & 30 & 31 & 32 & 33 & 34 & 35 & 36 & 37 & 38 & 39 & 40 \\
\hline
$\mathrm{dim}\, M_k(\Gamma_K)$ & 10 & 17 & 14 & 22 & 17 & 26 & 23 & 35 & 28 & 42 & 37 & 52 & 44 & 63 & 57 & 76 & 66 & 90 & 84 & 109 \\
\hline
$\mathrm{dim}\, M_k^{sym}(\Gamma_K)$ & 10 & 17 & 14 & 22 & 17 & 26 & 23 & 34 & 28 & 41 & 36 & 50 & 43 & 60 & 54 & 72 & 63 & 84 & 78 & 101 \\
\hline
$\mathrm{dim}\, \mathrm{Maass}_k(\Gamma_K)$ & 4 & 7 & 5 & 7 & 5 & 8 & 6 & 9 & 6 & 9 & 7 & 10 & 7 & 11 & 8 & 11 & 8 & 12 & 9 & 13 \\
\hline
\end{tabular}    
\end{table}


\subsection{Dimension formulas for $K = \mathbb{Q}(\sqrt{-11})$.} The procedure we use to compute Hilbert series of Hermitian modular forms for the field $\mathbb{Q}(\sqrt{-11})$ is mostly the same as the previous subsection. Here we need the corresponding series for symmetric paramodular forms of level three: $$\sum_{k=0}^{\infty} \mathrm{dim}\, M_k^{sym}(K(3)) t^k = \frac{1 + t^8 + t^{9} + t^{10} + t^{11} + t^{19}}{(1 - t^4)(1-t^6)^2 (1 - t^{12})}.$$ (This can be derived from Corollary 5.6 of \cite{Dern2} or computed directly. We remark that the series presented in \cite{Dern2} do not agree with this because the definition of ``symmetric" there is not graded-symmetric.) \\

Again write $$H_{even}(t) = \sum_{k \, \text{even}} \mathrm{dim}\, M_k^{sym}(\Gamma_K) t^k, \; \; H_{odd}(t) = \sum_{k \, \text{odd}} \mathrm{dim}\, M_k^{sym}(\Gamma_K) t^k.$$ Let $P =  (P_0^{\mathcal{H}_1},P_2^{\mathcal{H}_1},P_4^{\mathcal{H}_1},P_0^{\mathcal{H}_3},P_1^{\mathcal{H}_3}) $ denote the tuple of pullbacks $$P : M_{2*}^{sym}(\Gamma_K) \rightarrow M_{2*}(\mathrm{Sp}_4(\mathbb{Z})) \oplus S_{2*+2}(\mathrm{Sp}_4(\mathbb{Z})) \oplus S_{2*+4}(\mathrm{Sp}_4(\mathbb{Z})) \oplus M_{2*}^{sym}(K(3)) \oplus S_{2*+1}^{sym}(K(3)).$$ Reducing graded-symmetric even-weight forms against $b_5$ yields the exact sequences $$0 \rightarrow \mathrm{ker}\Big(P_0^{\mathcal{H}_3} : M_{2\ast-5}^{sym}(\Gamma_K) \rightarrow M_{2\ast-5}^{sym}(K(3)) \Big) \stackrel{\times b_5}{\longrightarrow} M_{2 \ast}^{sym}(\Gamma_K) \stackrel{P}{\longrightarrow} \mathrm{im}\, P \rightarrow 0,$$ $$0 \rightarrow \psi_{10}^2 \cdot \Big( \bigoplus_{k \in \{0,2,4\}} M_{2\ast - 20 + 2k}(\mathrm{Sp}_4(\mathbb{Z})) \Big) \longrightarrow \mathrm{im}\, P \longrightarrow M_{2 \ast}^{sym}(K(3)) \oplus M_{2\ast+1}^{sym}(K(3)) \longrightarrow 0,$$ from which we obtain $$\mathrm{Hilb}\, \mathrm{im}\, P = \frac{t^{16} + t^{18} + t^{20}}{(1-t^4)(1-t^6)(1-t^{10})(1-t^{12})} + \frac{1 + 2t^8 + 2t^{10} + t^{18}}{(1-t^4)(1-t^6)^2(1-t^{12})}$$ and \begin{align*} H_{even}(t) &= \mathrm{Hilb} \, \mathrm{im}\, P + t^5 \Big( H_{odd}(t) - \frac{t^9 + t^{11} + t^{19}}{(1-t^4)(1-t^6)^2(1-t^{12})} \Big) \\ &= t^5 H_{odd}(t) + \frac{1 + 2t^8 + t^{10} - t^{14} - t^{20} - t^{22} - t^{24} - t^{28} + t^{34}}{(1-t^4)(1-t^6)^2(1-t^{10})(1-t^{12})}. \end{align*}

Similarly, the reduction of odd-weight symmetric forms against $b_5$ through the tuple of pullbacks $P = (P_1^{\mathcal{H}_1},P_3^{\mathcal{H}_1},P_0^{\mathcal{H}_3})$ yields the exact sequences $$0 \longrightarrow M_{2\ast-4}^{sym}(\Gamma_K) \stackrel{\times b_5}{\longrightarrow} M_{2\ast+1}^{sym}(\Gamma_K) \stackrel{P}{\longrightarrow} \mathrm{im}\, P \longrightarrow 0$$ and $$0 \longrightarrow \psi_{10} \cdot \Big( M_{2\ast - 9}(\mathrm{Sp}_4(\mathbb{Z})) \oplus M_{2\ast - 7}(\mathrm{Sp}_4(\mathbb{Z})) \Big) \longrightarrow \mathrm{im}\, P \longrightarrow M_{2\ast+1}^{sym}(K(3)) \longrightarrow 0,$$ so $$H_{odd}(t) = \frac{t^7 + t^9}{(1-t^4)(1-t^6)(1-t^{10})(1-t^{12})} + \frac{t^9 + t^{11} + t^{19}}{(1-t^4)(1-t^6)^2 (1-t^{12})} + t^5 H_{even}(t).$$ Altogether we find \begin{align*} \mathrm{Hilb} \, M_*^{sym}(\Gamma_K) &= H_{even}(t) + H_{odd}(t) \\ &= \frac{1 + t^5 + t^7 + 2t^8 + 2t^9 + 2t^{10} + t^{11} + t^{12} + t^{13} + t^{14} + t^{15} + t^{16} + t^{17} + t^{18} + t^{19} + t^{23} - t^{29}}{(1-t^4)(1-t^6)^2(1-t^{10})(1-t^{12})}. \end{align*}

For skew-symmetric modular forms we argue as in the previous subsection and find $$\sum_{k=0}^{\infty} \mathrm{dim}\, M_{2k}^{skew}(\Gamma_K) t^{2k} = \frac{t^{26} + t^{28} + t^{30} + t^{32} + t^{34}}{(1 - t^4)(1-t^6)(1-t^{10})(1-t^{12})} + t^{24} \sum_{k=0}^{\infty} \mathrm{dim}\, M_{2k}^{sym}(\Gamma_K) t^{2k}$$ and \begin{align*} \sum_{k=0}^{\infty} \mathrm{dim}\, M_{2k+1}^{skew}(\Gamma_K) t^{2k+1} &= \frac{t^{31} + t^{33} + t^{35}}{(1 - t^4)(1-t^6)(1-t^{10})(1-t^{12})} + \frac{t^{27} + t^{29} + t^{37}}{(1 - t^4)(1-t^6)^2 (1 - t^{12})} \\ &\quad + t^5 \sum_{k=0}^{\infty} \mathrm{dim}\, M_{2k}^{skew}(\Gamma_K) t^{2k}, \end{align*} and altogether $$ \sum_{k=0}^{\infty} \mathrm{dim}\, M_k(\Gamma_K) t^k = \frac{P(t)}{(1-t^4)(1-t^5)(1-t^6)^2(1-t^{12})}$$ where \begin{align*} P(t) &= 1 + t^{7} + 2t^{8} + 2t^{9} + 2t^{10} + t^{11} - t^{13} - t^{14} - t^{15} + t^{17} + 2t^{18} + 2t^{19} + t^{20} \\ &- t^{22} - t^{23} - t^{24} - t^{25} + t^{26} + 2t^{27} + 2t^{28} + 2t^{29} + 2t^{30} + t^{31} - t^{36} + t^{37}. \end{align*}

The table below lists dimensions for the full space of Hermitian modular forms; the subspace of graded-symmetric Hermitian modular forms; and the subspace of Maass lifts.
\begin{footnotesize}
\begin{table}[htbp]
\centering
\caption{Dimensions for $\mathbb{Q}(\sqrt{-11})$}
\begin{tabular}{l*{21}{c}r}
\hline
$k$ & 1 & 2 & 3 & 4 & 5 & 6 & 7 & 8 & 9 & 10 & 11 & 12 & 13 & 14 & 15 & 16 & 17 & 18 & 19 & 20 \\
\hline
$\mathrm{dim}\, M_k(\Gamma_K)$ & 0 & 0 & 0 & 1 & 1 & 2 & 1 & 3 & 3 & 5 & 4 & 8 & 6 & 10 & 10 & 15 & 14 & 21 & 19 & 28  \\
\hline
$\mathrm{dim}\, M_k^{sym}(\Gamma_K)$ & 0 & 0 & 0 & 1 & 1 & 2 & 1 & 3 & 3 & 5 & 4 & 8 & 6 & 10 & 10 & 15 & 14 & 21 & 19 & 28  \\
\hline
$\mathrm{dim}\, \mathrm{Maass}_k(\Gamma_K)$ & 0 & 0 & 0 & 1 & 1 & 2 & 1 & 3 & 3 & 4 & 3 & 5 & 4 & 6 & 5 & 7 & 6 & 8 & 6 & 9   \\
\hline
\hline
$k$ & 21 & 22 & 23 & 24 & 25 & 26 & 27 & 28 & 29 & 30 & 31 & 32 & 33 & 34 & 35 & 36 & 37 & 38 & 39 & 40 \\
\hline
$\mathrm{dim}\, M_k(\Gamma_K)$ & 27 & 36 & 35 & 49 & 45 & 60 & 60 & 77 & 76 & 98 & 94 & 120 & 120 & 147 & 147 & 181 & 177 & 216 & 219 & 260 \\
\hline
$\mathrm{dim}\, M_k^{sym}(\Gamma_K)$ &27 & 36 & 35 & 48 & 45 & 59 & 59 & 75 & 74 & 94 & 91 & 114 & 114 & 138 & 138 & 168 & 165 & 198 & 200 & 236 \\
\hline
$\mathrm{dim}\, \mathrm{Maass}_k(\Gamma_K)$ & 8 & 10 & 8 & 11 & 9 & 12 & 10 & 13 & 11 & 14 & 11 & 15 & 13 & 16 & 13 & 17 & 14 & 18 & 15 & 19 \\
\hline
\end{tabular}    
\end{table}
\end{footnotesize}

\bibliographystyle{plainnat}
\bibliofont
\bibliography{\jobname}
\addresseshere

\end{document}